\definecolor{ForestGreen}{rgb}{0.13, 0.55, 0.13}
\theoremstyle{plain}
\newtheorem{theorem}{Theorem}[section]
\newtheorem*{theorem*}{Main Result}
\newtheorem{thm*}{Known result}
\newtheorem{corollary}[theorem]{Corollary}
\newtheorem{lemma}[theorem]{Lemma}
\newtheorem{proposition}[theorem]{Proposition}
\newtheorem{definition}[theorem]{Definition}
\theoremstyle{definition}
\theoremstyle{remark}
\newtheorem{remark}[theorem]{Remark}
\numberwithin{equation}{section}
\newcommand{\eqlab}[1]{\begin{equation}  \begin{aligned}#1 \end{aligned}\end{equation}} 
\newcommand{\bgs}[1]{\begin{equation*} \begin{aligned}#1\end{aligned}\end{equation*}} 
\newcommand{\syslab}[2] []  {\begin{equation}#1  \left\{\begin{aligned}#2\end{aligned}\right.\end{equation}} 
\newcommand{\sys}[2][]{\begin{equation*}#1  \left\{\begin{aligned}#2\end{aligned}\right.\end{equation*}}
\newcommand{\R}{\ensuremath{\mathbb{R}}}
\newcommand{\Rn}{\ensuremath{\mathbb{R}^N}}
\newcommand{\N}{\ensuremath{\mathbb{N}}}
\newcommand{\eps}{\ensuremath{\varepsilon}}
\DeclareMathOperator{\loc}{loc}
\title[Quasilinear logarithmic Choquard equations]{Quasilinear logarithmic Choquard equations\\ with exponential growth in $\R^N$}
\author[C.~Bucur]{Claudia Bucur}
\author[D.~Cassani]{Daniele Cassani}
\address[C.~Bucur \and D.~Cassani]{\newline\indent Dip. di Scienza e Alta Tecnologia
	\newline\indent
	Universit\`{a} degli Studi dell'Insubria
	\newline\indent and
	\newline\indent RISM--Riemann International School of Mathematics
	\newline\indent Villa Toeplitz, Via G.B. Vico, 46 -- 21100 Varese}
    \email{\href{mailto:daniele.cassani@uninsubria.it}{daniele.cassani@uninsubria.it}}
     \email{\href{mailto:claudiadalia.bucur@uninsubria.it}{claudiadalia.bucur@uninsubria.it}
    }
\author[C.~Tarsi]{Cristina Tarsi}
\address[C.~Tarsi]{\newline\indent Dipartimento di Matematica
	\newline\indent
	Universit\`a degli Studi di Milano
	\newline\indent Via C. Saldini, 50 -- 20133 Milano
	}
\email{\href{mailto:cristina.tarsi@unimi.it}{cristina.tarsi@unimi.it}}
\thanks{Corresponding author: daniele.cassani@uninsubria.it}
\subjclass[2010]{35A15; 35J60; 35B40}
\date{\today}
\keywords{Higher order fractional Schr\"odinger-Poisson systems, Schr\"odinger-Newton equations, Nonlocal nonlinear elliptic equations, Weighted Trudinger-Moser type inequalities in $\R^N$, Variational methods.}
\begin{document}

\begin{abstract}
We consider the $N$-Laplacian Schr\"odinger equation strongly coupled with higher order fractional Poisson's equations. When the order of the Riesz potential $\alpha$ is equal to the Euclidean dimension $N$, and thus it is a logarithm, the system turns out to be equivalent to a nonlocal Choquard type equation. On the one hand, the natural function space setting in which the Schr\"odinger energy is well defined is the Sobolev limiting space $W^{1,N}(\R^N)$, where the maximal nonlinear growth is of exponential type. On the other hand, in order to have the nonlocal energy well defined and prove the existence of finite energy solutions, we introduce a suitable $log$-weighted variant of the Pohozaev-Trudinger inequality which provides a proper functional framework where we use variational methods.
\end{abstract}
\maketitle

\setcounter{tocdepth}{3}

 \section{Introduction and main results}
 \noindent Consider the following system of elliptic equations
 \begin{equation}\label{gen_sys}
\begin{cases}
-\Delta_m u+V(x)|u|^{m-2} u=f(u)v,\quad &\\
& x\in\R^N,\quad N\geq 2\\
-\Delta^{\frac{\alpha}{2}} v=F(u)\ ,
\end{cases}
\end{equation}
where $\Delta_m $, $m\geq 2$, is the $m$-Laplace operator defined as follows
	\[\Delta_m u = \mbox{div}(|\nabla u|^{m-2} \nabla u),\]
$V:\R^N\to\R$ is the external Schr\"odinger potential, $F$ is the primitive of $f$ vanishing at zero and where $(-\Delta)^{\frac{\alpha}{2}}$, $\alpha>0$, is the fractional Laplacian, see Section \ref{heu}. System \eqref{gen_sys} is in gradient form as the nonlinearity in the right hand side of \eqref{gen_sys} is the gradient of the potential function $G(u,v)=F(u)v$. It is also strongly coupled as $u=0\iff v=0$.  However, \eqref{gen_sys} does not possess in general a variational structure because of the presence of the nonlocal operator in the second equation, which prevents solutions of the system to be critical points of an energy functional $E(u,v)$, which may not exist or may not be well defined.

\noindent The parameters $m,N,\alpha$ play an important role from the theoretical point of view as well as from that of applications, see \cite{MV} and references therein. For $m=2$ we have the linear Schr\"odinger operator in the left hand side of the first equation and the problem has been widely studied in dimension $N>2$ and for $\alpha<N$, see \cite{MV} for a survey and \cite{CVZ,CZ} for related critical cases. The case of dimension $N=2$ and $\alpha<N$ has been studied in \cite{BV,ACTY}. More recently in \cite{BCV,CiWe,castar} it has been considered the limiting case $\alpha=N=2$; see also \cite{ACFM,car_med_rib} for related results.

\noindent A major difficulty in the limiting case is to construct a proper function space framework in which to settle the problem. As developed in \cite{castar} in dimension $N=2$, in order to consider the maximal exponential growth, a suitable functional framework can be obtained by means of $log$-weighted versions of the Pohozaev--Trudinger inequality \cite{SP,NT}.

\noindent In this paper we tackle the general limiting case
\begin{equation}\label{gen_case}
\boxed{\alpha=N=m>2}\ .
\end{equation}
\noindent This leads from one side to handle a quasilinear Schr\"odinger equation in the system \cite{CWZ} and on the other side demands for a more general function space setting. A key ingredient for this purpose, is to extend the fundamental functional inequality established in \cite{castar}, in the special case $\alpha=N=m=2$, to the general case \eqref{gen_case}.

\noindent Let $I_N\colon \Rn \setminus \{0\}\to \R$ be the logarithmic Riesz kernel
\[ I_N(x)= \frac{1}{\gamma_N}\log\frac{1}{|x|}\]
with
\[	\gamma_N=2^{N-1} \pi^{\frac{N}2} \Gamma\left(\frac{N}2\right).\]
By setting $v:=I_N*F(u)$, \eqref{gen_sys} is formally equivalent (see Section \ref{heu}) to the following quasilinear Choquard type equation
\eqlab{ \label{main} -\Delta_N u  + V |u|^{N-2} u = (I_N*F(u)) f(u) \qquad \mbox{ in } \Rn,}
\noindent which does have a variational structure.

\noindent Indeed, \eqref{main} is the Euler-Lagrange equation related to the energy functional
$$E(u)= \frac 1N \int_{\Rn} |\nabla u|^N + V |u|^N \, dx-\frac 12 \int_{\Rn} (I_N *F(u))(x) F(u(x))\, dx \ ,$$
provided such energy is well defined in a suitable function space which we are going to construct in the sequel as one of our main results.

\medskip

\noindent Before stating our main results let us introduce a few assumptions:
\begin{itemize}
\item[$(V)$] $V \colon \Rn \to \R$ is continuous, $1$-periodic and there exists $V_0>0$ such that $V(x)\geq V_0$;

\medskip

\item[$(f_1)$] $f\colon \R \to \R$ is continuous and differentiable, such that
\begin{itemize}
  \item[(i)] $f(s) \geq 0$, for all $s\geq 0$ and we may also assume (as we look for positive solutions) $f(s)=0$ for $s\leq 0$;
  \item[(ii)] there exists $C>0$ such that $f(s) \leq C s^p e^{\alpha_N s^{\frac{N}{N-1}}}$ as $s \to +\infty$, for some $p>0$ and where $\alpha_N$ is given below;
  \item[(iii)] $f(s) \asymp s^{q-1}$, as $s \to +\infty$, for some $q> N$;
\end{itemize}

\medskip

\item[$(f_2)$] there exist $C>\delta>0$ such that $$\frac{N-2}{N}+\delta \leq \frac{F(s) f'(s)}{f^2(s)}\leq C,\quad s> 0;$$

\medskip

\item[$(f_3)$] $\lim_{s\to +\infty}\frac{F(s) f'(s)}{f^2(s)}= 1$, or equivalently
	$\lim_{s\to +\infty}\frac{d}{d s}\frac{F(s)}{f(s)}=0$; 	
	
	\medskip
	
\item[$(f_4)$] there exists $\beta>0$ such that
$$\lim_{s\to +\infty}\frac{ s^{\frac{2N-1}{N-1}} f(s) F(s)}{e^{ 2N (\omega_{N-1} )^\frac{1}{N-1} s^{\frac{N}{N-1}} }} \geq \beta > \upnu,$$
    where $\upnu$ will be explicitly given in Sect. \ref{var_frame}.
\end{itemize}

\noindent Notice that from the assumptions on $f$ we also deduce the following:
\begin{itemize}
\item there exists $s_0 >1$ such that
\syslab[0 \leq F(s) \leq C ]{ \label{f11}
		& |s|^q, &&  s \leq s_0,\\
		& s^{p-\frac 1{N-1}} \textcolor{black}{e^{\alpha_N s^{\frac{N}{N-1}}}}, &&  s> s_0;}
\item $f(s)$ is monotone increasing, hence $F(s)=\int_0^sf(\tau)d\tau \leq sf(s)$, while the quantity $\frac{F(s)}{f(s)}$ is well defined and vanishes only at $s=0$. Furthermore,
	\begin{equation}\label{F/f}
	\frac{d}{d s}\left(\frac{F(s)}{f(s)}\right)=\frac{f^2(s)-F(s)f'(s)}{f^2(s)}\leq \frac 2N -\delta
	\end{equation}
	which implies $F(s)\leq (\frac 2N-\delta) sf(s)$;
	
	\item $(f_3)$ implies a fine lower bound on the quotient  $\displaystyle\frac{Ff'}{f^2}$, as  $s\to +\infty$. Indeed, for any $\varepsilon>0$ there exists $s_\varepsilon>0$ such that
	\syslab[ \label{estF/f}
	\frac{Ff'}{f^2}(s)\geq ]
	{ &	\left(\frac 2N+\delta\right) s, & & s\leq s_\varepsilon\\
	& (1-\varepsilon)s, && s>s_\varepsilon ;
		}
	
	\item $(f_4)$ is in the spirit of the de Figueiredo--Miyagaki--Ruf condition \cite{dFMR} and turns out to be a suitable compactness condition in this context. The role of condition $(f_4)$ will be detailed in Section \ref{subsecMP}.
			\end{itemize}
	
\noindent Examples of functions $F(s)$ satisfying our set of assumptions are given below:
	\begin{equation*}
	F(s) =\begin{cases} s^q, s\leq s_0\\
			e^{\alpha s^{\frac{N}{N-1}}}, s>s_0
			\end{cases} ,\ \forall \, q>N;
\end{equation*}
\begin{equation*}
F(s)=s^pe^{\alpha s^{\frac{N}{N-1}}},\ \forall \, p>N;
\end{equation*}
\begin{equation*}
	F(s)= \begin{cases}s^q, \quad s\leq s_0\\
			\alpha s^{p}e^{\beta s^{\frac{N}{N-1}}},\quad  s>s_0
			\end{cases}
	 \end{equation*}
		for $q\geq N$, $p> 1$ and suitable constants $\alpha,\beta>0$.
	
\medskip

\noindent Let us now introduce some basic notation:
	\[ \|u\|_N:= \|u\|_{L^N(\Rn)}\]
	and
\[ \| u\|:= \|u\|_{W^{1,N}(\Rn)} = \left(\|\nabla u\|^N_N + \| u\|^N_N\right)^{\frac{1}{N}}.\]
Let $w(x):=\log (e+|x|)$ and define the  weighted Sobolev space  $W^{1,N} L^q_{w}(\Rn)$ as the completion of smooth compactly supported functions with respect to the norm
\begin{multline*} \|u\|_{q,w}^N = \|\nabla u\|_N^N + \|u\|_{L^q(w dx)}^{N} \\
= \int_{\R^N}|\nabla u|^N \, dx+\left( \int_{\R^N}|u|^q\log(e+|x|)\,dx\right)^{N/q} .\end{multline*}
When $q=N$, for simplicity we denote $W_{w}^{1,N}(\R^N):=W^{1,N} L^N_{w}(\Rn)$ and
\[
\|u\|_{w}^N : =\|\nabla u\|_N^N+\|u\|_{L^N(w dx)}^N=\int_{\R^N}|\nabla u|^Ndx+\int_{\R^N}|u|^N\log(e+|x|)\, dx.
\]
Let us set
\[  \|u\|_V := \left(\int_{\Rn} |\nabla u|^N + V |u|^N \, dx \right)^{\frac{1}N},
\]
and we use $W^{1,N}_V(\Rn)$ to denote the set of all functions with bounded $\|\cdot\|_V$ norm. Let us also set
 $w_0(x):=\log (1+|x|)$, and
\[ \|u\|_{q,V,w_0}^N := \| u\|_V^N + \|u\|^N_{L^q(w_0 dx)} = \|u\|_V^N+\left(\int_{\R^N}|u|^q\log(1+|x|)dx\right)^{\frac{N}q},\]
and consider $W^{1,N}_V L^q_{w_0}(\Rn)$  as the completion of smooth compactly supported functions with respect to the norm $\|\cdot\|_{q,V,{w_0}}$.

	\medskip

\noindent The proper function space setting in which the energy and the variational framework turns out to be well defined, will be a consequence of the following weighted version of the Pohozaev--Trudinger inequality, which we state here for simplicity in the case $q=N$ (see Section \ref{log_p_t} for the case $q>N$):

\begin{theorem}\label{thm_A1}
The weighted Sobolev space $W_{w}^{1,N}(\R^N)$ embeds into the weighted Orlicz space $L_{\phi_N}(\R^N, \log(e+|x|)dx)$ where
\[ \phi_N(t)=e^{t}-\sum_{j=0}^{N-2} \frac{t^j}{j!}.\] More precisely, the following holds
\begin{equation}\label{wT}
		\int_{\R^N} \phi_N\left(\alpha |u|^{\frac{N}{N-1}}\right)
\log(e+|x|)dx<\infty\ ,
\end{equation}
for any $u\in W^{1,N}_{w}(\R^N)$ and any $\alpha >0$.
Moreover, the following uniform bound holds
\begin{equation}\label{wM}
\sup_{\|u\|_{w}^N\leq 1} \int_{\R^N}\phi_N\left(\alpha_N \left(\frac{N}{N+1}\right)^{1/(N-1)} |u|^{\frac{N}{N-1}}\right)\log(e+|x|)dx <+\infty\ ,
\end{equation}
where $\alpha_N=N\omega_{N-1}^{\frac 1{N-1}}$ is the sharp Moser exponent, and $\omega_{N-1}$ is the $(N-1)-$dimensional surface of the unit sphere in  $\Rn$.
\end{theorem}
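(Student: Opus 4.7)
The plan is to adapt Ruf's splitting strategy from the classical Pohozaev--Trudinger inequality in $\Rn$ to accommodate the logarithmic weight. By the very definition of $W^{1,N}_w(\Rn)$ as the completion of smooth compactly supported functions, it suffices to establish \eqref{wT} and \eqref{wM} for $u\in C^\infty_c(\Rn)$ and pass to the limit. For such $u$ with $\|u\|_w\leq 1$, I would split $\Rn=\{|u|\leq 1\}\cup\{|u|>1\}$ and treat the two pieces separately.

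On the sublevel set, expanding $\phi_N(t)=\sum_{j\geq N-1}t^j/j!$ and using $|u|^{jN/(N-1)}\leq |u|^N$ for $j\geq N-1$ reduces the integral to
\bgs{\int_{\{|u|\leq 1\}}\phi_N\bigl(\alpha|u|^{N/(N-1)}\bigr)\log(e+|x|)\,dx \;\leq\; e^\alpha\int_{\Rn}|u|^N\log(e+|x|)\,dx \;\leq\; e^\alpha\|u\|_w^N,}
which already yields the finiteness part of \eqref{wT} for \emph{every} $\alpha>0$, with no constraint on the size of $\|u\|_w$. On the super-level set $\{|u|>1\}$ the weighted norm gives the crucial Chebyshev bound $\int_{\{|u|>1\}}\log(e+|x|)\,dx\leq \|u\|_{L^N(w\,dx)}^N$, so this set carries finite weighted measure. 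I would then introduce $v:=(|u|-1)_+\in W^{1,N}(\Rn)$, observe $\|\nabla v\|_N\leq \|\nabla u\|_N$, and use the elementary convexity estimate $(a+1)^{N/(N-1)}\leq (1+\varepsilon)a^{N/(N-1)}+C_\varepsilon$ to write
\bgs{\int_{\{|u|>1\}}\phi_N\bigl(\alpha|u|^{N/(N-1)}\bigr)\log(e+|x|)\,dx\;\leq\;C_\varepsilon\!\int_{\Rn}\!\phi_N\bigl(\alpha(1+\varepsilon)v^{N/(N-1)}\bigr)\log(e+|x|)\,dx+C_\varepsilon',}
and reduce the last integral to the classical Ruf-type Pohozaev--Trudinger inequality in $\Rn$ applied to $v$, pulling the weight out as a bounded factor on the localized region whose radius is controlled by the weighted $L^N$ bound of $u$.

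The sharp constant $(N/(N+1))^{1/(N-1)}$ in \eqref{wM} would come out through the rescaling $\tilde u:=c^{(N-1)/N}u$ with $c=N/(N+1)$: this converts the unit-norm constraint $\|u\|_w\leq 1$ into $\|\nabla\tilde u\|_N^N+\|\tilde u\|_{L^N(w\,dx)}^N\leq N/(N+1)$, leaving precisely the margin $1/(N+1)$ needed to absorb the factor $(1+\varepsilon)$ above together with the residual logarithmic weight, while the exponent stays strictly below the critical Moser threshold $\alpha_N$.

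The principal obstacle is that $w(x)=\log(e+|x|)$ is radially \emph{increasing}, so Schwarz symmetrization \emph{decreases} rather than preserves $\|u\|_{L^N(w\,dx)}$, blocking any direct one-dimensional reduction of the type used in Moser's original argument. The fix is to never symmetrize against the weight: the localization via $v=(|u|-1)_+$ confines the hard estimate to a set of finite weighted measure, where $\log(e+|x|)$ may be pulled out up to a controlled constant, after which one rearranges only the remaining \emph{unweighted} Pohozaev--Trudinger term. Tracking the precise interplay of these two steps to recover the sharp exponent $\alpha_N(N/(N+1))^{1/(N-1)}$ is the most delicate part of the argument.
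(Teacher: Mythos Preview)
Your approach is genuinely different from the paper's, and the main step contains a gap that I do not see how to close.

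The paper does \emph{not} use a level-set splitting. Instead it performs a radial change of variable $|y|=T(|x|):=|x|\,(\log(e+|x|))^{1/N}$, setting $v(y)=u(x)$. A direct computation in hyperspherical coordinates shows that this map is a quasi-isometry between $W^{1,N}_w(\Rn)$ and the unweighted space $W^{1,N}(\Rn)$: one obtains
\[
\left(\tfrac{N}{N+1}\right)^{N/2}\|u\|_w^N\;<\;\|v\|^N\;<\;\tfrac{N+1}{N}\,\|u\|_w^N,
\]
and simultaneously
\[
\int_{\Rn}\phi_N\bigl(\alpha|u|^{N/(N-1)}\bigr)\log(e+|x|)\,dx\;\le\;\int_{\Rn}\phi_N\bigl(\alpha|v|^{N/(N-1)}\bigr)\,dy.
\]
Both \eqref{wT} and \eqref{wM} then follow at once from the Li--Ruf inequality on $\Rn$, and the constant $(N/(N+1))^{1/(N-1)}$ is read off from the upper bound $\|v\|^N<\frac{N+1}{N}\|u\|_w^N$.

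In your scheme, the crucial assertion is that on $\{|u|>1\}$ one may ``pull the weight out as a bounded factor on the localized region whose radius is controlled by the weighted $L^N$ bound of $u$''. This is where the argument fails. The Chebyshev estimate $\int_{\{|u|>1\}}\log(e+|x|)\,dx\le\|u\|_{L^N(w\,dx)}^N$ bounds the \emph{weighted measure} of the super-level set, not its radius; nothing prevents $\{|u|>1\}$ from being an unbounded set consisting of small bumps drifting to infinity, even for smooth compactly supported $u_n$ approaching $u$ with $\|u_n\|_w\le 1$. Consequently $\log(e+|x|)$ is not uniformly bounded on $\{|u|>1\}$ over the unit ball of $W^{1,N}_w(\Rn)$, and you cannot reduce to an unweighted Ruf-type inequality for $v=(|u|-1)_+$. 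A H\"older attempt also stalls: you control only $\int_{\{|u|>1\}}\log(e+|x|)\,dx$, not $\int_{\{|u|>1\}}\log(e+|x|)^{p'}dx$ for $p'>1$. Your final paragraph correctly identifies that the weight obstructs symmetrization, but the proposed fix does not circumvent the obstacle; the paper's change of variable does, by absorbing the weight into the Jacobian rather than bounding it pointwise.

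A secondary issue: your rescaling heuristic for the constant $(N/(N+1))^{1/(N-1)}$ does not track correctly. With $\tilde u=c^{(N-1)/N}u$ and $c=N/(N+1)$ one gets $\|\tilde u\|_w^N=c^{N-1}\|u\|_w^N$, not $c\|u\|_w^N$, so the ``margin $1/(N+1)$'' does not arise as stated; in the paper the factor comes cleanly from the Jacobian bound $T'(r)T(r)^{N-1}\le\frac{N+1}{N}r^{N-1}\log(e+r)$.
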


\medskip

\noindent Inequality \eqref{wM} and its version in the case $q>N$ (Theorem \ref{thm_A2}), turn out to be key ingredients to obtain the following result:
	\begin{theorem}\label{thm1}
		Suppose the nonlinearity $f$ satisfies $(f_1)$--$(f_4)$ and that the potential $V$ enjoys $(V)$. Then, problem \eqref{main} possesses a nontrivial mountain pass solution which has finite energy in the weighted Sobolev space $W^{1,N}_V L^q_{w_0}(\Rn)$.
	\end{theorem}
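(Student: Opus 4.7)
The plan is to apply the mountain-pass theorem to the energy
\[
 E(u)=\frac{1}{N}\int_{\Rn}\bigl(|\nabla u|^N+V|u|^N\bigr)dx-\frac{1}{2}\int_{\Rn}(I_N*F(u))(x)F(u(x))\,dx
\]
on the Banach space $X:=W^{1,N}_V L^q_{w_0}(\Rn)$, and then to upgrade the abstract critical point to a genuine weak solution of \eqref{main} by exploiting the $\mathbb{Z}^N$-invariance inherited from the periodicity of $V$. The very first step is to check that $E\in C^1(X;\R)$. Writing $I_N=I_N^+-I_N^-$ with $I_N^+$ supported in $\{|z|\le 1\}$, the short-range positive part is handled by Hardy--Littlewood--Sobolev combined with the weighted Pohozaev--Trudinger inequality of Theorem~\ref{thm_A1} (and its $q>N$ variant from Section~\ref{log_p_t}), while the elementary bound $\log(1+|x-y|)\le \log(1+|x|)+\log(1+|y|)$ reduces the long-range negative tail to controlling $\|F(u)\|_{L^1(w_0\,dx)}$, which is finite on $X$ by the splitting \eqref{f11}.

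\textbf{Mountain-pass geometry.} For $\|u\|_{q,V,w_0}$ small, the bound $F(s)\le C|s|^q$ on $\{|s|\le s_0\}$ combined with the weighted Trudinger--Moser estimate on $\{|s|>s_0\}$ gives
\[
 \int_{\Rn}(I_N*F(u))F(u)\,dx\le C\|u\|_{q,V,w_0}^{2q}+o(\|u\|_{q,V,w_0}^N),
\]
and since $2q>N$ this produces a strictly positive barrier at some radius $\rho>0$. For a compactly supported positive test $\varphi$ with $\diam(\mathrm{supp}\,\varphi)<1$ and $t\to +\infty$, the kernel $I_N$ is positive on $\mathrm{supp}\,\varphi\times\mathrm{supp}\,\varphi$ and the exponential lower bound from $(f_4)$ forces the Choquard term to grow faster than $t^N$, so $E(t\varphi)\to-\infty$, closing the geometric setup and producing a positive min-max level $c>0$.

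\textbf{Cerami sequences and level estimate.} Since $f$ does not obey an Ambrosetti--Rabinowitz condition, I would work with a Cerami sequence $(u_n)$ at the level $c$. Boundedness in $X$ is obtained by testing $E'(u_n)$ both against $u_n$ and against $F(u_n)/f(u_n)$: $(f_2)$ makes the second test admissible and gives the bound \eqref{F/f}, while the quantitative lower bound \eqref{estF/f} permits subtracting the two identities and arriving at an inequality of the form $\|u_n\|_V^N\le C(1+c+o(1))$; the weighted component $\|u_n\|_{L^q(w_0 dx)}$ is then controlled by the Choquard energy itself via the weighted Trudinger--Moser bound. To stay strictly below the Trudinger--Moser threshold, I would insert Moser-type concentrating functions into the admissible paths and use $(f_4)$ with $\beta>\upnu$ to prove the sharp localization $c<c^*$, with $c^*$ the critical level read off from~\eqref{wM}.

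\textbf{Passing to the limit and non-triviality.} Up to translations $y_n\in\mathbb{Z}^N$ dictated by a Lions-type concentration lemma and by the periodicity of $V$, the shifted sequence $\widetilde u_n(\cdot)=u_n(\cdot+y_n)$ admits a non-zero weak limit $\widetilde u\in X$, since vanishing in $L^q_{\loc}$ would force $F(u_n)\to 0$ in the weighted $L^1$ and contradict $E(u_n)\to c>0$. Equi-integrability of $(F(u_n))$, which follows from the subcritical level estimate combined with Theorem~\ref{thm_A1}, lets one pass to the limit in the Choquard convolution term and conclude that $\widetilde u$ is a non-trivial weak solution of \eqref{main}. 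The main obstacle is the simultaneous derivation of Cerami boundedness and of the sharp estimate $c<c^*$: the sign-indefinite logarithmic kernel precludes any monotonicity for the Choquard energy and forces one to track the sharp constants in Theorem~\ref{thm_A1} and the explicit role of $(f_4)$ and of the threshold $\upnu$ very carefully.
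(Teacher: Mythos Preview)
Your overall architecture matches the paper's: $C^1$ regularity of the energy on $X$, mountain-pass geometry, a sharp level estimate via Moser-type concentrating sequences and $(f_4)$, boundedness of the min-max sequence, and recovery of a nontrivial critical point by $\mathbb Z^N$-translations and Lions' lemma. However, there is a genuine gap at the heart of the compactness step.

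The critical mountain-pass threshold is not ``read off from \eqref{wM}'' as you suggest; in the paper it is exactly $1/N$ (Lemma~\ref{MPlevel-estimate}), and this precise value only becomes useful through an auxiliary substitution that you do not mention. In Lemma~\ref{lem-PSimproveTM} one sets
\[
v_n:=G(u_n),\qquad G(t)=\int_0^t\sqrt[N]{\frac{N}{2}\,\frac{F(s)f'(s)}{f^2(s)}-\frac{N-2}{2}}\,ds,
\]
and, combining $\mathcal I_V(u_n)\to c$ with $\mathcal I_V'(u_n)[F(u_n)/f(u_n)]\to 0$ (your test function, but used here for a different purpose), derives the \emph{sharp} estimate $\|v_n\|_V^N\le Nc+o(1)<1$. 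Since $(f_3)$ forces $G(t)\sim t$ at infinity, the uniform Trudinger--Moser bound on $v_n$ transfers to $u_n$ and yields $\sup_n\int_{\Rn} F^\alpha(u_n)\,dx<\infty$ for every $1\le\alpha<1/(Nc)$. Your sketch produces only $\|u_n\|_V^N\le C(1+c)$ for an unspecified constant $C$; if $C(1+c)>1$ no uniform exponential bound on $F(u_n)$ follows, so the sentence ``equi-integrability of $(F(u_n))$ follows from the subcritical level estimate combined with Theorem~\ref{thm_A1}'' is precisely the step that requires justification and is where the $G$-substitution does the work. A smaller discrepancy: the paper does \emph{not} establish boundedness of $(u_n)$ in the full weighted norm before passing to the limit; Lemma~\ref{lem-PSbdd} bounds only $\|u_n\|_V$, and membership of the weak limit in $W^{1,N}_V L^q_{w_0}(\Rn)$ is obtained a posteriori in Proposition~\ref{prop-PS}, so your claim that ``the weighted component is then controlled by the Choquard energy'' runs in the wrong direction.
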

	
\medskip
	
\subsection*{Overview} In the next section we collect some preliminary material. Special attention is devoted to discuss equivalence between \eqref{main} and \eqref{gen_sys}. This is a quite delicate matter and still with some shadows which prevent to obtain optimal results. We are motivated by a very recent debate on this topic, towards a better understanding of the higher order fractional context.  

\noindent In Section \ref{log_p_t}, we prove some fundamental results which from one side extend classical embeddings from Functional Analysis, due independently to Pohozaev and Trudigner in late sixties, on the other side provide a new tool in the `variational toolbox' to prove existence results by variational methods; we are confident these results will be useful in other situations. Here we extend in a non trivial fashion to any dimension, previous results obtained in \cite{castar} in dimension two and then applied to prove the existence of finite energy solutions to Schr\"odinger--Newton systems by variational techniques. 

\noindent In Section \ref{var_frame}, we exploit the abstract results of Section \ref{log_p_t} to provide a suitable variational framework in which we can prove the existence of a mountain pass solution to \eqref{main}. Due to the presence of exponential growth in the nonlinearity and of a sign-changing logarithmic kernel in the nonlocal part of the equation, here even the most standard variational steps become somehow delicate. We take care of stressing differences with the two dimensional case, in particular passing from semilinear, in dimension $N=2$, to quasilinear nonlocal Schr\"odinger equations in higher dimensions $N\geq 3$, where some new ideas and efforts are needed. 

\section{On the equivalence between nonlocal equations and higher order fractional systems}\label{heu}
\noindent Here we discuss the equivalence between the Choquard type equation \eqref{main} and the higher order fractional system  \eqref{gen_sys}.
Formally, if in \eqref{main} we set
\[ \Phi_u:= I_N * F(u),\]
then the function $u$ solves the equation
\bgs{ \label{choq} -\Delta_N u + V |u|^{N-2} u = \Phi_u f(u), \qquad \mbox{in} \quad \Rn}
and moreover, $ \Phi_u $ is the unique solution in $\Rn$ to the following fractional equation
$$(-\Delta)^{\frac{N}2} \phi= F(u)\ .$$
However, this argument is affected by the notion of solution we deal with, this is somehow a delicate matter and not yet completely understood. Having in mind the commitment to make precise in the sequel what we mean by solution, we have the following, and for the moment heuristic 
\begin{proposition} \label{heu}
Let $u\in W^{1,N}_VL^1_{w_0}(\Rn)$ be a solution of \eqref{main}. Then  $u$ is a solution to
\eqlab{\label{sys}-\Delta_N u + V |u|^{N-2} u &= \phi f(u) &\mbox{in } & \Rn,}
where $\phi$ is the unique solution to
\bgs{
	(-\Delta)^{\frac{N}2} \phi &= F(u) &\mbox{in } & \Rn.
}
\end{proposition}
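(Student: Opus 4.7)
The plan is to set $\phi := I_N \ast F(u)$ and verify in turn that (i) $\phi$ is well-defined as a measurable function, (ii) $\phi$ solves the fractional Poisson equation $(-\Delta)^{N/2}\phi = F(u)$, and (iii) $\phi$ is the unique solution in an appropriately specified class. Once (i)--(iii) are in place, substituting $\phi$ for $I_N\ast F(u)$ in \eqref{main} immediately yields the first equation of the system, which closes the argument.

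For (i), I would exploit the structure of the logarithmic kernel $I_N(x) = \gamma_N^{-1}\log(1/|x|)$, which is locally integrable at the origin and grows like $\log|x|$ at infinity. Splitting the convolution integral into the near-field $\{|x-y|\leq 1\}$ and the far-field $\{|x-y|>1\}$, the near-field contribution is controlled by $F(u)\in L^\infty_{\loc}$, which follows from $(f_1)$ together with the embedding of $W^{1,N}$ provided by Theorem \ref{thm_A1}; the far-field contribution is dominated by $\int |F(u(y))|\log(e+|y|)\,dy$, which is finite since $u\in W^{1,N}_V L^1_{w_0}(\Rn)$ and $F(u)$ inherits the required weighted integrability from $(f_1)$ and \eqref{f11}.

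For (ii), I would appeal to the fact that $I_N$ is the fundamental solution of $(-\Delta)^{N/2}$ on $\Rn$ precisely in the critical case $\alpha=N$, that is, $(-\Delta)^{N/2} I_N = \delta_0$ in the distributional sense. This identity is classical in the polyharmonic case (with $N$ even), and it can be verified through the Fourier multiplier $|\xi|^N$ or via the hypersingular integral definition when $N$ is odd, after regularization to account for the non-decay of $I_N$. Applying the operator under the convolution and testing against any Schwartz function then yields $(-\Delta)^{N/2}\phi = F(u)$ in $\mathcal{D}'(\Rn)$.

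For (iii), uniqueness has to be interpreted with some care, and this is where I expect the main obstacle. The operator $(-\Delta)^{N/2}$ has a nontrivial kernel in the class of tempered distributions, consisting of polynomials of suitably bounded degree, so any two solutions of $(-\Delta)^{N/2}\phi = F(u)$ differ by such a polynomial and $\phi$ is singled out only once a growth or integrability restriction is prescribed. I would therefore fix the convention that $\phi$ is sought among functions with at most logarithmic growth at infinity (equivalently, modulo polynomials), which is the class naturally dictated by the logarithmic Riesz potential and is compatible both with the weighted space $W^{1,N}_V L^1_{w_0}(\Rn)$ hosting $u$ and with the variational framework later developed in Section \ref{var_frame}. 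This is precisely the delicate point the authors flag by calling the proposition heuristic: a different admissible class for $\phi$ could in principle yield a different system, and the optimal choice is not completely settled in the higher-order fractional literature.
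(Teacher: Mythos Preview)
Your outline gets the overall architecture right---define $\phi = I_N * F(u)$, check it solves the fractional Poisson equation, then address uniqueness---but it misplaces the real difficulty, and as a result step (ii) contains a genuine gap.

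You propose to obtain $(-\Delta)^{N/2}\phi = F(u)$ by invoking $(-\Delta)^{N/2}I_N = \delta_0$ and ``applying the operator under the convolution.'' This is precisely the step that is \emph{not} routine in the borderline case $2s=N$. On the Fourier side $|\xi|^{-N}$ is not locally integrable at the origin, so $I_N$ is not the inverse transform of $|\xi|^{-N}$ in $\mathcal S'(\Rn)$ without renormalisation, and the identity $(-\Delta)^{N/2}I_N = \delta_0$ does not hold in the naive distributional sense; the paper spells this out explicitly in the discussion preceding the proof. Accordingly, the paper does \emph{not} argue directly. It instead passes through Hyder's modified kernel (Lemma~\ref{kko}): with
\[
\tilde v(x) = \frac{1}{\gamma_N}\int_{\Rn}\log\frac{1+|y|}{|x-y|}\,F(u(y))\,dy,
\]
one has $\tilde v\in W^{N-1,1}_{\loc}(\Rn)$ and $\tilde v$ solves $(-\Delta)^{N/2}\tilde v = F(u)$ in the sense of Definitions~\ref{ipo11}--\ref{ipo12}. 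The key observation is then that $I_N*F(u)-\tilde v = -\gamma_N^{-1}\int_{\Rn}\log(1+|y|)\,F(u(y))\,dy$ is a \emph{constant}, finite exactly because $u$ lies in the log-weighted space; since $(-\Delta)^{(N-1)/2}$ annihilates constants, $I_N*F(u)$ is itself a distributional solution. Your near-field/far-field split in (i) is close in spirit to this constant-difference computation, but you never invoke the external input (Lemma~\ref{kko}) that actually supplies the regularity and verifies the equation.

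Conversely, you locate the ``heuristic'' caveat in step (iii), uniqueness modulo polynomials. The paper does quote the representation $u=\tilde v+p$ from \cite{hyder_structure}, but its own proof of the proposition does not argue uniqueness at all; the whole emphasis is on making sense of $(-\Delta)^{N/2}(I_N*F(u))$. So your diagnosis of where the subtlety lies is off: the delicate issue is in \emph{defining} the solution, not in selecting it among several.
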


\noindent Let us begin by recalling the definition in the distributional sense of the fractional Laplacian of any order. Set for $s>0$,
\[
L_s(\Rn):=\Big\{ u\in L^1_{\scriptsize{\mbox{loc}}}(\Rn) \; \Big| \; \int_{\Rn}\frac{|u(x)|}{1+|x|^{N+2s}} \, dx <\infty\Big\} ,
\]
the operator $(-\Delta)^s u$ is defined for all $u\in L_s(\Rn)$ via duality, as
\eqlab{ \label{ipo3}
	\langle  (-\Delta)^s u, \varphi \rangle =\int_{\Rn}  u \, (-\Delta)^s \varphi  \, dx, \qquad \forall \varphi \in \mathcal S(\Rn),
}
where
\[
(-\Delta)^s \varphi= \mathcal F^{-1}\left( |\xi|^{2s} \mathcal F \varphi(\xi) \right) , \qquad \forall \varphi \in \mathcal S(\Rn)
\]
denoting by $\mathcal F$ the Fourier transform and where $\mathcal S(\Rn)$ denotes the Schwartz space of rapidly decreasing functions.
We remark that the right hand side of \eqref{ipo3} is well defined, thanks to the fact that for $\varphi\in \mathcal S(\Rn)$,
\bgs{ |(-\Delta)^s \varphi(x)|\leq \frac{C}{|x|^{N+2s}},}
see e.g. \cite[Proposition 2.1]{hyder_structure}.

\noindent Let us consider the fractional Poisson's equation
\begin{equation}\label{ipo4}
(-\Delta)^su=f \quad  \hbox{ in }\  \Rn,  \hbox{ with }\ 0<s\leq \frac N2
\end{equation}
If $s\in(0,1)$, the setting  and the representation formulas for solutions of this equation are settled in literature, among which recent studies carried out in \cite{Bucur,Abaty}. The case $s>1$ has been considered in very recent papers  \cite{Abat,stinga,BGV}, whereas a general approach, based on the notion of distributional solution dates back to classical works \cite{Stein,Landkof,samkilmar}, see also \cite{galdi}.

 \begin{definition}\label{ipo12}
 	Given $f\in \mathcal S'(\Rn)$, we say that $u\in L_{\frac{N}2}(\Rn)$ is a {\it{ distributional solution}} of \eqref{ipo4}  if
 	\[
 	\int_{\Rn} u  (-\Delta)^{\frac{N}2}\varphi \, dx = \langle f, \varphi \rangle
 	\]
 	for all $\varphi \in \mathcal S(\Rn)$.
 \end{definition}

 \noindent It is well known that, if $s<\frac N2$, the distributional solution of \eqref{ipo4} is given by
   $$
   u(x)=\mathcal F^{-1}\left(\frac{1}{|\xi|^{2s}}\mathcal F f(\xi)\right)(x)
   $$
which is realized also by convolution with the Newtonian potential
$$
u(x)=(I_{2s}\ast f)(x), \qquad \hbox{ where } \ I_{2s}(x)=\frac{1}{\gamma_{N,s}}|x|^{2s-N}
$$
 for some $\gamma_{N,s}>0$, (see \cite[Chapter 5]{samkilmar}).
 Note that the Newtonian potential, in the case $2s=N$, is given by
 $$
 I_N(x)=\frac{1}{\gamma_N}\log\frac 1{|x|}=\mathcal F |\xi|^{-N}(x),
 $$
 Nevertheless, when $2s=N$ it is not possible to define the solution to \eqref{ipo4} by Fourier transform  in $\mathcal S(\Rn)$ in general, due to the singularity of $|\xi|^{-N}$ in zero.  However, various assumptions on $f$, which improve the regularity of its Fourier transform, allow to  recover  the notion of distributional solution, for instance, the assumption
 $$
 \mathcal F f(0)=0,  \quad \hbox{ that is } \quad \int_{\Rn}f(x)dx=0.
 $$
 \noindent The notion of Fourier transform has to be settled in a suitable framework, such as Lizorkin's spaces, defined as the subspace of Schwartz functions which are orthogonal to polynomials, namely :
 \[
 \Phi=\left\{ \varphi(x) \, \big|  \varphi\in \mathcal S(\Rn), \, \int x^j\varphi(x)=0, \,\, \forall \,|j|\in \mathbb N_0 \right\}.
\]
 Again, the convolution with a $\log$-kernel does not yield enough $L^1_{loc}$-regularity to provide a notion of distributional solution in the general context of $\mathcal S(\Rn)$. See \cite[Chapter 5]{samkilmar} for more details.

 \noindent Notice that when $N$ is even, $(-\Delta)^{N/2} $ is an integer order operator, so its fundamental solution in $\Rn$ is known, see e.g. \cite[Proposition 22]{marty09}.

\noindent When $N$ is odd, the fractional case, an alternative  approach to circumvent the loss of regularity in the borderline case $2s=N$ is given in \cite{hyder_structure} (see also \cite{martinazzi1,martinazzi2,hyderentire}). We next recall some ideas from \cite{hyder_structure}. The argument is simple, the $N/2$-Laplacian can be seen as the composition of the  $1/2$-Laplacian with the Laplacian of integer order $(N-1)/2$.  The following proposition ensures that the logarithmic potential $I_N$ is the fundamental solution of the $N/2$-Laplacian in this sense.
\begin{proposition}\cite[Lemma A.2]{hyder_structure}
	Let $N\geq 3$ be an odd integer number and define
	\[
	\Phi(x) :=(-\Delta)^{\frac{N-1}2} I_N(x) = \frac{c_n}{|x|^{N-1}}.
	\]
	Then $\Phi$ is the fundamental solution of $(-\Delta)^{\frac12} $ in $\Rn$, in the sense that for all $f\in L^1(\Rn)$ it holds that $\Phi*f \in L_{\frac{1}2}(\Rn)$ and that
	\[
	\langle (-\Delta)^{\frac{1}2} (\Phi*f), \varphi\rangle := \int_{\Rn} (\Phi*f)(x)	(-\Delta)^{\frac{1}2}  \varphi(x) \, dx = \int_{\Rn} f\varphi \, dx,
	\]
	for all $\varphi \in \mathcal S(\Rn)$.
\end{proposition}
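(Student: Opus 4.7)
The plan is to split the verification into a pointwise step and a distributional step. The pointwise step establishes the explicit formula $\Phi(x) = c_n|x|^{-(N-1)}$, identifying $\Phi$ with a constant multiple of the Riesz kernel $I_1$ of order one in $\Rn$. The distributional step then shows that convolution with this kernel inverts $(-\Delta)^{1/2}$ against Schwartz test functions, which, after an application of Fubini, is precisely the claimed duality identity.

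For the pointwise formula I would use that when $N$ is odd, $(N-1)/2$ is a positive integer, so $(-\Delta)^{(N-1)/2}$ is an iterated classical Laplacian acting on $I_N(x) = \gamma_N^{-1}\log(1/|x|)$ in $\Rn\setminus\{0\}$. A direct radial computation gives $\Delta \log(1/|x|) = -(N-2)|x|^{-2}$, and the identity $\Delta|x|^\beta = \beta(\beta+N-2)|x|^{\beta-2}$ shows that each subsequent application of $-\Delta$ raises the negative exponent by two. Iterating $(N-1)/2$ times produces a constant multiple of $|x|^{-(N-1)}$, and $c_n$ is read off as the product of the accumulated coefficients.

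For the distributional step I would first show $\Phi*f\in L_{1/2}(\Rn)$ for $f\in L^1$ by estimating the weighted integral of $\Phi*f$ against $(1+|x|^{N+1})^{-1}$ through Fubini; a routine splitting of the inner integral into $\{|x-y|\leq|y|/2\}$ and its complement bounds it uniformly in $y$, yielding an overall bound by $C\|f\|_1$. Fubini then shifts $(-\Delta)^{1/2}$ across the convolution,
\[
\int_{\Rn}(\Phi*f)(x)\,(-\Delta)^{1/2}\varphi(x)\,dx = \int_{\Rn} f(y)\bigl(\Phi*(-\Delta)^{1/2}\varphi\bigr)(y)\,dy,
\]
reducing the proof to the pointwise inversion identity $\Phi*(-\Delta)^{1/2}\varphi = \varphi$. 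On the Fourier side this is transparent, since $\widehat{\Phi}(\xi)$ is a constant multiple of $|\xi|^{-1}$ and $\widehat{(-\Delta)^{1/2}\varphi}(\xi) = |\xi|\hat\varphi(\xi)$, so their product is a constant multiple of $\hat\varphi(\xi)$, the normalization being pinned down by the pointwise computation of the previous step.

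The main obstacle will be making the Fourier manipulation rigorous. The symbol $|\xi|^{-1}$ is non-integrable at the origin in dimension $N\geq 2$, so $\widehat{\Phi}$ is only a tempered distribution modulo polynomials, and the identity $\Phi*(-\Delta)^{1/2}\varphi=\varphi$ cannot be tested against an arbitrary Schwartz $\varphi$ without further care: one either restricts to the Lizorkin subclass introduced in the excerpt (which annihilates the polynomial ambiguity) or handles the low-frequency singularity via a cutoff argument and passes to the limit. A secondary technical point is the Fubini justification itself, which rests on the Schwartz-type decay $|(-\Delta)^{1/2}\varphi(x)|\leq C(1+|x|)^{-(N+1)}$ combined with the $L_{1/2}$-type bound on $\Phi*f$ established above.
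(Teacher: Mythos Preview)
The paper does not prove this proposition: it is stated with the citation \cite[Lemma A.2]{hyder_structure} and no argument is given. The paper simply quotes the result from Hyder's work and uses it as a black box in the subsequent discussion of the equivalence between the Choquard equation and the fractional system. So there is no ``paper's own proof'' to compare against.

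That said, your outline is a reasonable sketch of how such a result is typically proved, and you have correctly identified the two genuine technical points: justifying the Fubini step (which requires both the decay $|(-\Delta)^{1/2}\varphi(x)|\lesssim (1+|x|)^{-(N+1)}$ and the $L_{1/2}$ membership of $\Phi*f$), and handling the low-frequency singularity of $|\xi|^{-1}$ when interpreting $\widehat{\Phi}$ as a tempered distribution. Your awareness that the Fourier identity $\Phi*(-\Delta)^{1/2}\varphi=\varphi$ is only formally immediate, and needs either a Lizorkin restriction or a regularization argument, is exactly right; this is indeed where the work lies in Hyder's proof as well.
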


\noindent However, it is not straightforward from here that $I_N$ is the fundamental solution of $(-\Delta)^{N/2}$ in the sense of Definition \ref{ipo12}. Actually, by interpreting the $N/2$-Laplacian (when $N$ is odd) as the composition of the  Laplacian of (integer) order $(N-1)/2$ and the $1/2$-Laplacian, one has that Definition \ref{ipo12} turns out to be equivalent to the following 

\begin{definition}{\cite[Definition 1.1]{hyder_structure}}
	\label{ipo11}
	Given $f\in \mathcal S'(\Rn)$, we say $u$ is a solution of \eqref{ipo4} if
	\[ u\in W^{N-1,1}_{\loc}(\Rn), \qquad  \Delta^{\frac{N-1}2} u \in L_{\frac12} (\Rn),\]
	and
	\[ \int_{\Rn} (-\Delta)^{\frac{N-1}2} u (-\Delta)^{\frac12} \varphi \, dx = \langle f, \varphi \rangle
	\]
	for all $\varphi \in \mathcal S(\Rn)$.
\end{definition}

\noindent Indeed, we have 
\begin{proposition}{\cite[Proposition 2.6]{hyder_structure}}
	Let $f\in L^1(\Rn)$. Then $u$ is a solution of \eqref{ipo4} in the sense of Definition \ref{ipo11} if and only if $u$ is a solution in the sense of Definition \ref{ipo12}.
\end{proposition}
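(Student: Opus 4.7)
The plan is to exploit the composition identity
\bgs{(-\Delta)^{N/2}\varphi = (-\Delta)^{(N-1)/2}\bigl((-\Delta)^{1/2}\varphi\bigr), \qquad \varphi\in\mathcal{S}(\Rn),}
which is classical on the Schwartz class since $N$ is odd, and then transfer the integer-order operator $(-\Delta)^{(N-1)/2}$ from the test function onto $u$ via integration by parts. All else reduces to a careful cutoff/passage-to-the-limit.

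First I would establish the backward implication (Definition \ref{ipo11} $\Rightarrow$ Definition \ref{ipo12}). Fix $\varphi\in\mathcal{S}(\Rn)$ and set $\psi:=(-\Delta)^{1/2}\varphi$. From the pointwise estimate $|D^\gamma\psi(x)|\leq C_\gamma(1+|x|)^{-N-1}$ (the derivative version of \cite[Proposition 2.1]{hyder_structure}), $\psi$ and all its derivatives lie in $L^1(\Rn)$. The key identity is
\bgs{\int_{\Rn} u\,(-\Delta)^{N/2}\varphi\,dx = \int_{\Rn}(-\Delta)^{(N-1)/2}u\cdot(-\Delta)^{1/2}\varphi\,dx,}
proved by a cutoff argument: with $\chi\in C^\infty_c(\Rn)$, $\chi\equiv 1$ on $B_1$, set $\chi_R(x)=\chi(x/R)$ and integrate by parts $(N-1)$ times on $\int u\,(-\Delta)^{(N-1)/2}(\chi_R\psi)\,dx$, which is legal because $u\in W^{N-1,1}_{\loc}(\Rn)$ and $\chi_R\psi$ is compactly supported. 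Expanding $(-\Delta)^{(N-1)/2}(\chi_R\psi)$ by Leibniz produces the main term $\chi_R\,(-\Delta)^{(N-1)/2}\psi$ plus commutator terms involving $D^j\chi_R$ with $1\leq j\leq N-1$. As $R\to\infty$, the main term converges to $\int(-\Delta)^{(N-1)/2}u\cdot\psi\,dx$ by dominated convergence, exploiting $(-\Delta)^{(N-1)/2}u\in L_{1/2}(\Rn)$ and the decay of $\psi$; the commutator terms vanish because $\|D^j\chi_R\|_\infty\lesssim R^{-j}$ on an annulus of width $\simeq R$, while the decay of $D^{N-1-j}\psi$ and the local $W^{N-1-j,1}$-bound on $u$ provide the remaining summability. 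Combining with Definition \ref{ipo11} one obtains Definition \ref{ipo12} at once.

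For the forward implication the only missing ingredient is the regularity $u\in W^{N-1,1}_{\loc}(\Rn)$ together with $(-\Delta)^{(N-1)/2}u\in L_{1/2}(\Rn)$. My plan is to introduce the reference solution $u_0:=I_N*f$, which is well defined since $f\in L^1(\Rn)$ and $I_N$ is locally integrable and grows only logarithmically at infinity. Using the identity $(-\Delta)^{(N-1)/2}I_N=\Phi$ from the preceding proposition, one has $(-\Delta)^{(N-1)/2}u_0=\Phi*f$, which belongs to $L_{1/2}(\Rn)$ by standard Riesz potential estimates; in particular $u_0$ satisfies Definition \ref{ipo11}, and the identity of the previous paragraph then shows that $u_0$ also solves Definition \ref{ipo12}. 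Consequently $v:=u-u_0$ is a distributional solution of $(-\Delta)^{N/2}v=0$ with $v\in L_{N/2}(\Rn)$; the Liouville-type rigidity available in \cite{hyder_structure} forces $v$ to be a (tempered, hence polynomially bounded) polynomial, which is smooth. Therefore $u=u_0+v$ has the regularity required by Definition \ref{ipo11}, and the equivalence follows from the identity of the first step.

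The hard part will be the cutoff estimate. The decay $|x|^{-N-1}$ of $(-\Delta)^{1/2}\varphi$ is precisely the borderline rate that makes $\psi$ belong to the weight $(1+|x|^{N+1})^{-1}$ defining $L_{1/2}(\Rn)$, so each commutator term $\int(D^j\chi_R)\,D^{N-1-j}u\cdot D^?\psi\,dx$ must be bounded by balancing the factor $R^{-j}$ against a localized $L^1$ norm of lower order derivatives of $u$ on the annulus $\{R\leq|x|\leq 2R\}$ and the decay of $\psi$; everything else in the proof is routine arithmetic on convolutions and integration by parts.
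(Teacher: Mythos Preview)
The paper does not prove this proposition; it is quoted from \cite[Proposition 2.6]{hyder_structure} without argument. That said, your sketch contains a genuine gap that the surrounding text of the paper explicitly warns about.

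In the implication Definition~\ref{ipo12} $\Rightarrow$ Definition~\ref{ipo11} you set $u_0:=I_N*f$ and claim it is ``well defined since $f\in L^1(\Rn)$ and $I_N$ is locally integrable and grows only logarithmically at infinity''. This fails: for $f$ merely in $L^1(\Rn)$ the integral $\int_{\Rn}\log|x-y|^{-1}f(y)\,dy$ need not converge, because nothing bounds $\int_{\Rn}|f(y)|\log(1+|y|)\,dy$. The paper says so in as many words just before Lemma~\ref{kko} (``the convolution with a $\log$-kernel does not yield enough $L^1_{\loc}$-regularity \dots''), and the fix---also taken from \cite{hyder_structure}---is to use the renormalised potential
\[
\tilde v(x)=\frac{1}{\gamma_N}\int_{\Rn}\log\Bigl(\frac{1+|y|}{|x-y|}\Bigr)f(y)\,dy,
\]
which \emph{is} defined for every $f\in L^1(\Rn)$, lies in $W^{N-1,1}_{\loc}(\Rn)$, and satisfies $(-\Delta)^{(N-1)/2}\tilde v=\Phi*f\in L_{1/2}(\Rn)$. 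Replace $I_N*f$ by $\tilde v$ and your Liouville step can proceed; with $I_N*f$ the construction is ill-posed.

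There is a second, more structural hole in the direction Definition~\ref{ipo11} $\Rightarrow$ Definition~\ref{ipo12}. Definition~\ref{ipo12} demands $u\in L_{N/2}(\Rn)$, and you never establish this from the hypotheses of Definition~\ref{ipo11}. In fact your cutoff passage to the limit presupposes it: the convergence of the ``main term'' $\int_{\Rn} u\,\chi_R\,(-\Delta)^{(N-1)/2}\psi\,dx$ towards $\int_{\Rn} u\,(-\Delta)^{N/2}\varphi\,dx$ relies on $u\,(-\Delta)^{N/2}\varphi\in L^1(\Rn)$, i.e.\ on $u\in L_{N/2}(\Rn)$; likewise the commutator bound $R^{-j-N-1}\int_{R\le|x|\le 2R}|u|\,dx\to 0$ needs a growth control on $u$ at infinity that Definition~\ref{ipo11} by itself does not supply. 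One route, consistent with the paper's toolbox, is again to compare with $\tilde v$: since $\tilde v$ solves both formulations (Lemma~\ref{kko}), reduce to $f\equiv 0$ and invoke the Liouville-type classification in \cite{hyder_structure} to see that $u-\tilde v$ is a polynomial of degree at most $N-1$, hence in $L_{N/2}(\Rn)$; the membership $\tilde v\in L_{N/2}(\Rn)$ (logarithmic growth against the weight $(1+|x|)^{-2N}$) then closes the argument.
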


\noindent As already pointed out, the convolution $I_N*f$ itself does not provide in general a distributional solution of \eqref{ipo4}. Nevertheless, a suitable modification of the logarithmic potential $I_N$, from one side yields enough $L^1_{\loc}$-regularity, on the other side it is the fundamental solution of the $N/2$-Laplacian, in the distributional sense of Definition \ref{ipo12}.

\begin{lemma}{\cite[Lemma 2.3]{hyder_structure}}\label{kko}
	Let $f\in L^1(\Rn)$, and for all $x\in \Rn$,
	\[ \tilde v(x) := \frac{1}{\gamma_N} \int_{\Rn} \log \left( \frac{1+|y|}{|x-y|}\right) f(y) \, dy.\]
	Then $\tilde v \in W^{N-1,1}_{\loc}(\Rn)$ and
	\begin{multline*}
	\int_{\Rn} \tilde v(x) (-\Delta)^{\frac{N}2} \varphi(x) \, dx = \int_{\Rn} (-\Delta)^{\frac{N-1}2} \tilde v(x) (-\Delta)^{\frac{1}2} \varphi(x) \, dx  \\
=\int_{\Rn} f(x) \varphi(x) \, dx
\end{multline*}
	for all $\varphi \in \mathcal S(\Rn)$,
	i.e, $\tilde v$ is a distributional solution (in the sense of Definitions \ref{ipo11}, \ref{ipo12}) of \eqref{ipo4}.
\end{lemma}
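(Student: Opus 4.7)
The plan is to reduce the statement to the fundamental-solution result for $(-\Delta)^{1/2}$ just recalled, by peeling off $(N-1)/2$ classical Laplacians from $\tilde v$ and then invoking that proposition. Exploiting the factorization $(-\Delta)^{N/2}=(-\Delta)^{1/2}\circ (-\Delta)^{(N-1)/2}$, the case of odd $N$ splits into two steps: first compute $(-\Delta)^{(N-1)/2}\tilde v$ explicitly, then apply the preceding proposition to the resulting convolution with $\Phi(x)=c_n|x|^{-(N-1)}$. For even $N$ one argues directly from the fundamental solution of the integer-order operator $(-\Delta)^{N/2}$.

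First I would check that $\tilde v$ is pointwise well defined and belongs to $W^{N-1,1}_{\loc}(\Rn)$. The splitting
\[
\log\frac{1+|y|}{|x-y|}=\log(1+|y|)-\log|x-y|
\]
shows that the integrand has its only singularity at $y=x$, where $-\log|x-y|$ is locally integrable; moreover, as $|y|\to\infty$ the sum is $O(|x|/|y|)$, hence integrable against $f\in L^1(\Rn)$ uniformly for $x$ in compact sets. For a multi-index $\alpha$ with $1\leq|\alpha|\leq N-1$, the constant-in-$x$ term is killed by $D^\alpha_x$, and the residual kernel $-D^\alpha_x\log|x-y|$ is homogeneous of degree $-|\alpha|$ in $x-y$, thus in $L^1_{\loc}$ near the singularity (since $|\alpha|<N$) and $O(|y|^{-|\alpha|})$ at infinity. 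Differentiating under the integral sign by dominated convergence then yields $D^\alpha \tilde v\in L^1_{\loc}(\Rn)$ for $|\alpha|\leq N-1$.

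Next, for $N\geq 3$ odd, I would compute $(-\Delta)^{(N-1)/2}\tilde v$ by pulling the integer-order operator inside the integral. The $\log(1+|y|)$ contribution is annihilated, while the proposition above yields
\[
(-\Delta)^{(N-1)/2}_x\bigl(-\log|x-y|\bigr)=\frac{\gamma_N c_n}{|x-y|^{N-1}},
\]
so that $(-\Delta)^{(N-1)/2}\tilde v = \Phi\ast f$. Plugging this into the preceding proposition applied to $f\in L^1(\Rn)$ gives, for every $\varphi\in\mathcal S(\Rn)$,
\[
\int_{\Rn}(-\Delta)^{(N-1)/2}\tilde v\,(-\Delta)^{1/2}\varphi\,dx=\int_{\Rn}(\Phi\ast f)(-\Delta)^{1/2}\varphi\,dx=\int_{\Rn} f\varphi\,dx.
\]
To identify the left-hand side with $\int_{\Rn}\tilde v(-\Delta)^{N/2}\varphi\,dx$, I would integrate by parts $(N-1)/2$ times against the classical Laplacian: since $\varphi\in\mathcal S(\Rn)$ decays with all derivatives and $\tilde v$ grows at most logarithmically at infinity, no boundary contributions arise. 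The case of even $N$ proceeds analogously, with the classical Newton potential playing the role of $\Phi$ and all Laplacians being integer-order, so that the Fubini and integration-by-parts steps are elementary.

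The only genuine technical obstacle is the justification of differentiating up to order $N-1$ under the integral sign and, at the threshold step, of inverting the singular kernel $|x-y|^{-(N-1)}$ against $f\in L^1(\Rn)$. The first point is handled by the uniform local bounds described above; the second is exactly the content of the proposition we cite, and it is precisely the reason the regularized potential $\tilde v$ is introduced in place of the bare convolution $I_N\ast f$, whose convergence would require more decay on $f$ than membership in $L^1(\Rn)$.
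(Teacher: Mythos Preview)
The paper does not supply its own proof of this lemma: it is quoted verbatim as \cite[Lemma 2.3]{hyder_structure}, so there is no in-paper argument to compare your attempt against. Your outline follows precisely the route that the surrounding cited material (the proposition identifying $\Phi=(-\Delta)^{(N-1)/2}I_N$ as the fundamental solution of $(-\Delta)^{1/2}$, and Proposition~2.6 of \cite{hyder_structure} linking Definitions~\ref{ipo11} and~\ref{ipo12}) is set up to support, and it is essentially the argument in Hyder's paper.

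Two minor points worth tightening if you write this out in full. First, the integration-by-parts step moving $(-\Delta)^{(N-1)/2}$ from $\tilde v$ onto $(-\Delta)^{1/2}\varphi$ needs the observation that $(-\Delta)^{1/2}\varphi$ is smooth with $|(-\Delta)^{1/2}\varphi(x)|\leq C|x|^{-N-1}$ (it is not Schwartz), while $(-\Delta)^{(N-1)/2}\tilde v=\Phi\ast f=O(|x|^{-(N-1)})$; the product is then $O(|x|^{-2N})$, which is what makes the boundary terms vanish. Second, the passage from weak derivatives $D^\alpha\tilde v\in L^1_{\loc}$ to the pointwise identity $(-\Delta)^{(N-1)/2}\tilde v=\Phi\ast f$ is cleanest done distributionally: pair with a test function, use Fubini, and invoke $(-\Delta)^{(N-1)/2}I_N=\Phi$ in $\mathcal D'(\Rn)$ rather than differentiating under the integral at the top order.
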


\begin{lemma}{\cite[Lemma 2.4]{hyder_structure}}
	Let $u$ be a solution of \eqref{ipo4} in the sense of Definition \ref{ipo12} with $f\in L^1(\Rn)$.
	Then
	\[
	u= \tilde v + p
	,\]
	where $p$ is a polynomial of degree at most $n-1$.
\end{lemma}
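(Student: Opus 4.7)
The strategy is to recast the statement as a Liouville-type claim for $(-\Delta)^{N/2}$. Setting $w:=u-\tilde v$, Lemma \ref{kko} combined with Definition \ref{ipo12} yields
\[
\int_{\Rn} w(x)\,(-\Delta)^{N/2}\varphi(x)\,dx = 0\qquad\text{for every }\varphi\in\mathcal S(\Rn),
\]
so the task reduces to showing that any $w\in L_{N/2}(\Rn)$ satisfying this identity must be a polynomial of degree at most $N-1$.

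First I would verify that both $u$ and $\tilde v$ lie in $L_{N/2}(\Rn)$: the former is built into Definition \ref{ipo12}, while from the explicit representation in Lemma \ref{kko} together with $f\in L^1(\Rn)$ the modified logarithmic potential $\tilde v$ grows at most logarithmically, namely $|\tilde v(x)|\leq C\bigl(1+\log(1+|x|)\bigr)\|f\|_{L^1}$, obtained by splitting the defining integral according to whether $|x-y|\geq 1$ or $|x-y|\leq 1$ and exploiting the local integrability of $\log|\cdot|$. In particular $w\in\mathcal S'(\Rn)$ and admits a Fourier transform $\mathcal F w$ as a tempered distribution.

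Next I would show $\operatorname{supp}\mathcal F w\subseteq\{0\}$ via a duality argument designed to avoid the lack of smoothness of $|\xi|^N$ at the origin. For any $\phi\in C_c^\infty(\Rn\setminus\{0\})$, the function $\phi/|\xi|^N$ is itself smooth and compactly supported in $\Rn\setminus\{0\}$, hence in $\mathcal S(\Rn)$. Setting $\psi:=\mathcal F^{-1}(\phi/|\xi|^N)\in\mathcal S(\Rn)$ one gets $(-\Delta)^{N/2}\psi=\mathcal F^{-1}\phi$, and inserting $\psi$ into the vanishing identity produces
\[
0=\int_{\Rn} w\,\mathcal F^{-1}\phi\,dx = \langle\mathcal F w,\phi\rangle.
\]
Thus $\mathcal F w$ annihilates $C_c^\infty(\Rn\setminus\{0\})$, hence is point-supported. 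By the classical structure theorem for distributions with point support, there exist a finite integer $K$ and constants $c_\alpha$ such that $\mathcal F w=\sum_{|\alpha|\leq K}c_\alpha\,\partial^\alpha\delta$, and Fourier inversion identifies $w$ with a polynomial.

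Finally, the degree bound is read off the decay encoded in $w\in L_{N/2}(\Rn)$: since $\int_{\Rn}|x|^k(1+|x|^{2N})^{-1}\,dx<\infty$ if and only if $k<N$, the polynomial $w$ has degree at most $N-1$, which is the claim. The main obstacle I anticipate is precisely the Fourier-side step when $N$ is odd, since $|\xi|^N$ fails to be $C^\infty$ at the origin; the ``test against $\phi/|\xi|^N$'' device above is tailored to circumvent this, and as an alternative one could invoke the factorisation $(-\Delta)^{N/2}=(-\Delta)^{(N-1)/2}(-\Delta)^{1/2}$ underlying Definition \ref{ipo11}, reducing the problem to the classical Liouville theorem for the polyharmonic operator combined with the $1/2$-Laplacian case.
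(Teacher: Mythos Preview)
The paper does not prove this lemma; it is quoted as \cite[Lemma 2.4]{hyder_structure} and used as a black box, so there is no in-paper argument to compare your proposal against.

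Your argument is the standard Liouville route and is essentially correct. Two small points worth tightening. First, the identity you write as $\int_{\Rn} w\,\mathcal F^{-1}\phi\,dx=\langle\mathcal F w,\phi\rangle$ should read $\int_{\Rn} w\,\mathcal F\phi\,dx=\langle\mathcal F w,\phi\rangle$ under the usual duality convention; this is harmless, since replacing $\phi$ by its reflection (still in $C_c^\infty(\Rn\setminus\{0\})$) repairs it. Second, the pointwise bound $|\tilde v(x)|\leq C(1+\log(1+|x|))\|f\|_{L^1}$ is not literally true for general $f\in L^1(\Rn)$ (the convolution with $\log|\cdot|$ need not be locally bounded), but all you actually need is $\tilde v\in L_{N/2}(\Rn)$, and this follows from the $L^1_{\loc}$ regularity provided by Lemma \ref{kko} together with the logarithmic behaviour of the kernel at infinity after integrating against the weight $(1+|x|^{2N})^{-1}$. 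The support-of-$\mathcal F w$ step and the degree bound via $\int_{\Rn}|x|^k(1+|x|^{2N})^{-1}\,dx<\infty\iff k<N$ are fine. Your alternative route through the factorisation $(-\Delta)^{N/2}=(-\Delta)^{1/2}(-\Delta)^{(N-1)/2}$ is in fact closer to how Hyder organises the proof in the cited reference.
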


\noindent As a consequence of what we have recalled here from \cite{hyder_structure}, we are now in the position to proof Proposition  \ref{heu}. 
\begin{proof}[Proof of Proposition \ref{heu}]
Let $u\in W^{1,N} L^q_{w_0}(\Rn)$ be a (weak) solution of \eqref{main}. Then $F(u) \in L^p(\Rn)$ for any $p\geq 1$, as a consequence of Theorems \ref{thm_A1} and \ref{thm_A2}. Let
\[ \tilde v(x) = \int_{\Rn} \log \left(\frac{1+|y|}{|x-y|}\right) F(u(y)) \, dy.
\]
Let us rewrite equation \eqref{main} as follows 
\begin{multline*}
  (-\Delta)_N u(x) +V(x) |u(x)|^{N-2} u(x) \\
  = \tilde v (x) f(u(x)) +  \big[(I_N*F(u)) (x) -\tilde v(x)\big] f(u(x))\ .
\end{multline*}
Set 
\[ \tilde \Phi(x):= I_N*F(u) (x) = \tilde v (x) +  \big[(I_N*F(u)) (x) -\tilde v(x)\big]  \]
and recall that 
\bgs{
	&( I_N*F(u)) (x) -\tilde v(x)  =
\\  =& \; \frac{1}{\gamma_N}\int_{\Rn} \left( \log \frac{1}{|x-y|} - \log \left(\frac{1+|y|}{|x-y|}\right)\right) F(u(y)) \, dy\\
	=&\; - \frac{1}{\gamma_N}\int_{\Rn} \log (1+|y|)F(u(y)) \, dy \ .
}
Since $u\in W^{1,N} L^q_{w_0}(\Rn)$, according to Theorem \ref{thm_A2}
\[ \int_{\Rn} \log (1+|y|)F(u(y)) \, dy <\infty,\]
hence $\kappa_N:= (I_N*F(u)) (x) -\tilde v(x)$ is a constant function. Hence
\[ \tilde \Phi(x)= \tilde v  (x)+  \kappa_N \in W^{N-1,1}_{\loc}(\Rn)\]
and according to Lemma \ref{kko}, for any $\varphi \in \mathcal S(\Rn)$ we  have
	\bgs{
		\int_{\Rn} \tilde \Phi (-\Delta)^{\frac{N}2} \varphi \, dx=&\; \int_{\Rn} (-\Delta)^{\frac{N-1}2}\tilde  \Phi (-\Delta)^{\frac12} \varphi \, dx
		\\
		= &\; \int_{\Rn} (-\Delta)^{\frac{N-1}2} (\tilde v +\kappa_N) (-\Delta)^{\frac12} \varphi \, dx
		\\
		=&\; \int_{\Rn} (-\Delta)^{\frac{N-1}2} \tilde v (-\Delta)^{\frac12} \varphi \, dx
		\\
		=&\; \int_{\Rn} F(u) \varphi \, dx .}
		Indeed, we point out that since $N$ is odd, $ (-\Delta)^{\frac{N-1}2}$ is an integer order operator and hence $ (-\Delta)^{\frac{N-1}2} \kappa_N=0$.  Therefore,
$\tilde \Phi$ is a distributional solution of $(-\Delta)^{\frac{N}2} \phi = F(u)$, in the sense of Definition \ref{ipo12}.
\end{proof}
\begin{remark}
  Notice that 
  $$
I_N*F(u)\in L^1_{\loc}(\Rn)
$$
since
\begin{multline*}
\int_{\Omega}|I_N*F(u)|dx\leq \int_{\Omega\times\Rn}|\log|x-y||F(u)dxdy\leq\\
\leq C_{\mu}\int_{\Omega\times\Rn} \left[\frac{1}{|x-y|^\mu}+\log(1+|x|)+\log(1+|y|)\right]F(u(y))dxdy<+\infty
\end{multline*}
where $\mu>0$. Boundedness follows by the Hardy-Littlewood-Sobolev inequality and from Theorems \ref{thm_A1} and \ref{thm_A2}. Similarly one also has 
$$
I_N*F(u)\in L_{\frac N2}(\Rn)
$$
and that it is a distributional solution of
$$
(-\Delta)^{N/2}\phi=F(u)
.$$
\end{remark}

\noindent We conclude this preliminary section by recalling two classical versions of the Hardy--Littlewood--Sobolev inequality which will be used later on:
\begin{proposition}[HLS inequality]\label{HLS}
	Let $s, r>1$ and $0<\mu<N$ with $1/s+\mu/N+1/r=2$, $f\in
	L^s(\R^N)$ and $g\in L^r(\R^N)$. There exists a constant
	$C(s,N,\mu,r)$, independent of $f,h$, such that
	$$
	\int_{\R^N}\left[\frac{1}{|x|^{\mu}}\ast f(x)\right]g(x)\leq
	C(s,N,\mu,r) \|f\|_s\|g\|_r.
	$$
\end{proposition}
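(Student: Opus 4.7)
Since this is the classical Hardy--Littlewood--Sobolev inequality, the proof I have in mind is standard. The plan is to reduce the bilinear estimate to the boundedness of the Riesz potential, prove the latter by a weak-type/splitting argument, and conclude by Marcinkiewicz interpolation and H\"older.

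First, by H\"older duality applied to the bilinear pairing $\int g(x)\,(|x|^{-\mu}\ast f)(x)\,dx$, the claim is equivalent to the continuity of the Riesz potential
\[
T_{\mu}f(x):=\int_{\R^N}\frac{f(y)}{|x-y|^{\mu}}\,dy
\]
from $L^s(\R^N)$ to $L^{r'}(\R^N)$, where the relation $1/s+\mu/N+1/r=2$ translates into $1/r'=1/s-(N-\mu)/N$; in particular $1<s<N/(N-\mu)$.

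Second, I would establish the weak-type estimate $T_\mu\colon L^s\to L^{r',\infty}$ by the classical splitting of the kernel $K(x)=|x|^{-\mu}$. For $R>0$ write $K=K_R^{\mathrm{in}}+K_R^{\mathrm{out}}$, with $K_R^{\mathrm{in}}=K\mathbf{1}_{B_R}$. Young's inequality gives
\[
\|K_R^{\mathrm{in}}\ast f\|_s\le \|K_R^{\mathrm{in}}\|_1\|f\|_s=c\, R^{N-\mu}\|f\|_s,\qquad \|K_R^{\mathrm{out}}\ast f\|_{\infty}\le \|K_R^{\mathrm{out}}\|_{s'}\|f\|_s=c'R^{N/s'-\mu}\|f\|_s,
\]
the tail term being finite precisely because $s<N/(N-\mu)$. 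Given $\lambda>0$, choose $R$ so that the $L^\infty$ bound equals $\lambda/2$; Chebyshev applied to the first summand then yields
\[
\left|\{|T_\mu f|>\lambda\}\right|\le C\,\lambda^{-r'}\|f\|_s^{r'}.
\]

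Third, running the previous argument at two exponents $s_1<s<s_2$ strictly inside the admissible interval and invoking the Marcinkiewicz interpolation theorem upgrades the weak bounds to the strong estimate $\|T_\mu f\|_{r'}\le C\|f\|_s$. A final H\"older estimate on $\int g\,T_\mu f$ returns the claimed inequality with a constant $C(s,N,\mu,r)$. The main obstacle is the bookkeeping in the splitting: one must verify that the integrability thresholds for $\|K_R^{\mathrm{in}}\|_1$ and $\|K_R^{\mathrm{out}}\|_{s'}$ are simultaneously compatible with the scaling relation $1/s+\mu/N+1/r=2$, and that the chosen interpolation endpoints lie strictly inside $(1,N/(N-\mu))$; once the indices are aligned, the remaining estimates are routine.
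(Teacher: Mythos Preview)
Your argument is the standard route to the Hardy--Littlewood--Sobolev inequality and is correct: the duality reduction to the Riesz potential, the kernel splitting with Young's inequality on each piece, the choice of $R$ as a function of $\lambda$, and the Marcinkiewicz interpolation all line up with the scaling relation $1/s+\mu/N+1/r=2$ exactly as you indicate. The index check you flag as the main obstacle does go through: the finiteness of $\|K_R^{\mathrm{out}}\|_{s'}$ requires $\mu s'>N$, which is precisely $s<N/(N-\mu)$, and one computes that the Chebyshev step yields the exponent $-r'$ with $r'=N/(\mu-N/s')$.

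There is, however, nothing to compare with in the paper itself: Proposition~\ref{HLS} is merely \emph{recalled} there as a classical inequality, with no proof given (the authors write that they ``conclude this preliminary section by recalling two classical versions of the Hardy--Littlewood--Sobolev inequality''). So your write-up supplies an argument where the paper simply cites the result.
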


\begin{proposition}[Logarithmic HLS
	inequality]\label{LHLS}Let $f,g$ be two nonnegative
	functions belonging to $L\ln L(\R^N)$, such that $\int f \log(1+|x|)<\infty, \int g \log(1+|x|)<\infty $ and $\|f\|_1=\|g\|_1=1$. There exists a constant $C_N$,
	independent of $f,g$, such that
	\begin{equation*}
	2N\int_{\R^N}\left[\log \frac{1}{|x|}\ast f(x)\right]g(x)\leq
	C_N+\int_{\mathbb R^N}f\log f
	dx+\int_{\mathbb R^N}g\log g dx\ .
	\end{equation*}
\end{proposition}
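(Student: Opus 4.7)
The plan is to obtain the logarithmic HLS inequality as the $\lambda\to 0^+$ limit of the sharp Hardy--Littlewood--Sobolev inequality stated in Proposition \ref{HLS}. This is the classical route due to Beckner (generalizing Carlen--Loss in dimension $N=2$), and it is well suited here because only existence of $C_N$ is required, not its sharp value.

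First I would apply HLS with the conjugate choice $s=r=p_\lambda:=\frac{2N}{2N-\lambda}$, which satisfies $1/p_\lambda+\lambda/N+1/p_\lambda=2$, so that for $\lambda\in(0,\lambda_0)$ small
$$\iint_{\R^N\times\R^N}\frac{f(x)\,g(y)}{|x-y|^\lambda}\,dx\,dy\;\le\;C_{N,\lambda}\,\|f\|_{p_\lambda}\|g\|_{p_\lambda}.$$
At $\lambda=0$ the two sides both collapse to $\|f\|_1\|g\|_1=1$, so the strategy is to subtract $1$, divide by $\lambda$, and pass to the limit. The LHS differentiates formally into $\iint f(x)g(y)\log\frac{1}{|x-y|}\,dx\,dy$ using the pointwise expansion $|x-y|^{-\lambda}=1-\lambda\log|x-y|+O(\lambda^2(\log|x-y|)^2)$. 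On the RHS, a Taylor expansion of the $L^{p_\lambda}$-norm around $p=1$ gives $\|f\|_{p_\lambda}=1+\frac{\lambda}{2N}\int f\log f\,dx+o(\lambda)$, and similarly for $g$; together with the expansion of $C_{N,\lambda}$ around $\lambda=0$ this identifies the constant $C_N$ as (essentially) the derivative of $\log C_{N,\lambda}$ at $\lambda=0$.

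To make this rigorous I would carry out three careful estimates. Step (i): justify $\int f^{p_\lambda}=1+\frac{\lambda}{2N}\int f\log f+o(\lambda)$ by dominated convergence applied to $(f^{p_\lambda}-f)/\lambda$, using the Young-type bound $f^{p_\lambda}\le f+C\,f\,\log_+\!f+C$ valid for $\lambda\in(0,\lambda_0)$, which is finite thanks to $f\in L\log L(\R^N)$. Step (ii): split $\log\frac{1}{|x-y|}=\log^+\frac{1}{|x-y|}-\log^+|x-y|$. For the singular part, bound $\log^+\frac{1}{|x-y|}\le C_\mu|x-y|^{-\mu}$ for arbitrarily small $\mu>0$ and use HLS, which is finite by the $L\log L$ assumption. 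For the tail part, use $\log(1+|x-y|)\le\log(1+|x|)+\log(1+|y|)+\log 2$, whose integral against $f(x)g(y)$ is controlled by the moment hypotheses $\int f\log(1+|x|)<\infty$, $\int g\log(1+|x|)<\infty$. These two bounds give a dominating function in $L^1(\R^N\times\R^N,f(x)g(y)\,dx\,dy)$ that allows the limit to pass inside. Step (iii): extract the $O(\lambda)$ contribution from $C_{N,\lambda}$; since the sharp constant in HLS is explicit (Lieb's formula) and smooth in $\lambda$ near $0$, this produces a finite universal constant $C_N$.

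The main obstacle is Step (ii): controlling the error term in $|x-y|^{-\lambda}=1-\lambda\log|x-y|+R_\lambda(x,y)$ uniformly in $\lambda$ so that $\lambda^{-1}R_\lambda$ is dominated by an $f(x)g(y)\,dx\,dy$-integrable function. The key is that the two hypotheses act on complementary regions: the $L\log L$ assumption tames the logarithmic singularity at $x=y$ via the HLS splitting, while the $\log(1+|x|)$-moments tame the growth of $-\log|x-y|$ at infinity. Once Step (ii) is in place, the other steps are routine and yield the asserted inequality with a dimensional constant $C_N$ independent of $f,g$.
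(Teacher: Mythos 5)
The paper does not itself prove this proposition; it is recalled as a classical fact (the Carlen--Loss logarithmic HLS inequality), so there is no in-paper argument to compare against. The route you propose --- log-HLS as the $\lambda\to 0^+$ limit of the diagonal HLS inequality --- is the one associated with Beckner and is sound in spirit, but as written it has a genuine gap at the very first step: the hypothesis $f,g\in L^1\cap L\log L(\R^N)$ does \emph{not} place $f,g$ in any $L^p(\R^N)$ with $p>1$, so Proposition~\ref{HLS} with $s=r=p_\lambda=\frac{2N}{2N-\lambda}>1$ cannot be invoked. For such data $\|f\|_{p_\lambda}$ may be $+\infty$ for every $\lambda>0$, and the inequality you want to start from then carries no information.

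This defect propagates into both of your dominated-convergence claims. In Step (i), the asserted Young-type bound $f^{p_\lambda}\leq f+C\,f\log^+ f+C$ with $C$ uniform over $\lambda\in(0,\lambda_0)$ is false, since for any fixed $\eps=p_{\lambda_0}-1>0$ one has $f^{1+\eps}/(f\log f)=f^{\eps}/\log f\to\infty$ as $f\to\infty$; no such dominator exists and $\int f^{p_\lambda}$ may well be infinite. In Step (ii), the proposed dominator $|x-y|^{-\mu}$ is not $f(x)g(y)\,dxdy$-integrable in general: take $f=g=c\,\mathbf{1}_{\{|x|<1/2\}}\,|x|^{-N}\big(\log\tfrac{1}{|x|}\big)^{-3}$. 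This $f$ lies in $L^1\cap L\log L(\R^N)$ with all the required $\log$-moments finite, yet a dyadic computation near the diagonal shows $\iint f(x)f(y)\,|x-y|^{-\mu}\,dxdy=+\infty$ for every $\mu>0$, while $\iint f(x)f(y)\log\tfrac{1}{|x-y|}\,dxdy<\infty$, exactly as log-HLS asserts. The singularity thus sits at the $L\log L$ threshold and cannot be tamed by any power-law dominator, so the phrase ``use HLS, which is finite by the $L\log L$ assumption'' has no content. The Beckner strategy can be rescued by first carrying out Steps (i)--(iii) for bounded, compactly supported $f,g$ (where all $L^p$-norms are finite and your expansions are legitimate), and then extending to general data by truncation and a careful monotone-convergence argument on the sign-split integrand $\log^+\tfrac{1}{|x-y|}-\log^+|x-y|$; that approximation layer --- precisely the part your write-up dismisses as routine --- is where the $L\log L$ and $\log$-moment hypotheses are genuinely put to work. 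Alternatively, one can simply cite the Carlen--Loss reference for this classical result, which is what the paper implicitly does.
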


\section{A \texorpdfstring{$\log$}{}-weighted Pohozaev--Trudinger type inequality in \texorpdfstring{$\R^N$}{}}\label{log_p_t}
\noindent In this section, we prove a Pohozaev--Trudinger type inequality in the whole $\R^N$, with a logarithmic weight which appears just in the mass part of the energy. The prototype weight is \[ w=\log(e+|x|),\]  which plays a role only as $|x|\to +\infty$.  The main result of this section is a quite involved extension of \cite[Theorem 3.1]{castar}, where the two dimensional case was considered. We begin with the case $q=N$ whence the case $q>N$ will be covered in Theorem \ref{thm_A2}.

\begin{proof}[Proof of Theorem \ref{thm_A1}]
We perform a change of variables, by using hypershperical coordinates in $\R^N$, to pass from $W_{w_0}^{1,N}(\R^N)$ to $W^{1,N}(\R^N)$ as follows

$$
x=
\begin{cases}
 x_1=|x|\sin \theta_1 \sin \theta_2\dots \sin \theta_{N-2}\sin \theta_{N-1}\\
 x_2=|x|\sin \theta_1 \sin \theta_2\dots \sin \theta_{N-2}\cos \theta_{N-1}\\
 x_3=|x|\sin \theta_1 \sin \theta_2\dots \cos \theta_{N-2}\\
 \dots\\
 x_N=|x|\cos \theta_1,
\end{cases}
$$
where $\theta_1, \dots \theta_{N-2}\in  [0,\pi]$, whence $\theta_{N-1}\in [0, 2\pi)$, $|x|^2=x_1^2+\dots +x_N^2$.
By acting only on the radial component of a point in $\Rn$, set 
$$
T(|x|)=|y|, \ \ \frac y{|y|}=\frac x{|x|}, \ \ |y|=|x|\sqrt[N]{\log(e+|x|)}.
$$
We set $r=|x|$ and $s=|y|$, hence $s=T(r)=r\sqrt[N]{\log(e +r)}$. We obtain
$$
T'(r)=\frac{N\log(e+r)+\frac{r}{e+r}}{N[\log(e+r)]^{\frac{N-1}{N}}}>0, \ \ T(0)=0, \ \lim_{r\to +\infty} T(r)=+\infty
$$
and thus $T$ is invertible on $\R^N$ (though the inverse map is not explicitly known). 
\noindent Set
\[
v(y):=u(x)
\]
or, equivalently
\begin{multline*}
  \ u(r\sin \theta_1\dots\sin \theta_{N-1},\dots, r\cos \theta_1) \\
v\left(T(r)\sin \theta_1\dots\sin \theta_{N-1},\dots, T(r)\cos \theta_1\right)\ .
\end{multline*}
Then, denoting $\theta=(\theta_1, \dots, \theta_{N-1})$ and
\begin{eqnarray*}
&&w(r, \theta
):=u(r\sin \theta_1\dots\sin \theta_{N-1},\dots, r\cos \theta_1)\\
&& \widetilde w(s, \theta
):=v\left(s\sin \theta_1\dots\sin \theta_{N-1},\dots, s\cos \theta_1 \right), \\
&& w(r, \theta
)= \widetilde w\left( T(r), \theta\right
),
\end{eqnarray*}
we compute
\begin{eqnarray*}
&&w_r(r, \theta
)= \widetilde w_s\left( T(r), \theta
\right)T'(r), \\
&&w_{\theta_i}\left( T(r), \theta
\right)=\widetilde w_{\theta_i}\left( T(r),  \theta
\right) \ \ \  i=1, \dots, N-1.
\end{eqnarray*}
Therefore 
\bgs{
& \int_{\R^N}|\nabla v|^N\, dy\\
			=&\; \int_{0}^{2\pi} \int_0^\pi \sin \theta_{N-2}\dots \int_0^\pi \sin^{N-2}{\theta_1}\int_0^{+\infty}\left[\widetilde w_s^2(s,\theta)+\frac{\widetilde w^2_{\theta_1}(s,\theta)}{s^2}+\dots\right.
	\\
		&\; \left.+\frac{\widetilde w^2_{\theta_{N-1}}(s,\theta)}{s^2\sin^2\theta_1\dots \sin^2 \theta_{N-2}}\right]^{\frac N2}s^{N-1}ds\, d\theta_1 \dots  d\theta_{N-2} \, d\theta_{N-1}\\
		=&\;\int_{0}^{2\pi}\int_0^\pi \sin \theta_{N-2}\dots \int_0^\pi \sin^{N-2}{\theta_1}
			\int_0^{+\infty}\left[\widetilde w_s^2(T(r),\theta )+\frac{\widetilde w^2_{\theta_1}(T(r),\theta)}{T^2(r)}+\dots\right.
	\\
		&\;\left.+\frac{\widetilde w^2_{\theta_{N-1}}(T(r), \theta)}{T^2(r)\sin^2\theta_1\dots \sin^2 \theta_{N-2}}\right]^{\frac N2}T^{N-1}(r)T'(r)dr \, d\theta_1 \dots  d\theta_{N-2} \, d\theta_{N-1}
	\\
	=&\;\int_{0}^{2\pi} \int_0^\pi \sin \theta_{N-2} \dots \int_0^\pi \sin^{N-2}{\theta_1}\int_0^{+\infty}\left[\frac{ w_r^2(r,\theta
)}{[T'(r)]^2}+\frac{ w^2_{\theta_1}(r,\theta)}{r^2}\frac{r^2}{T^2(r)}+\dots\right.
	\\
		&\;\left.+\frac{w^2_{\theta_{N-1}}(r, \theta)}{r^2}\frac{r^2}{T^2(r)\sin^2\theta_1\dots \sin^2 \theta_{N-2}}\right]^{\frac N2}T^{N-1}T'dr d\theta_1 \dots  d\theta_{N-2}d\theta_{N-1}.
}
Now, since
\[
\frac{1}{[T'(r)]^2}=\frac{\left[\log(e+r)\right]^{\frac{2(N-1)}N }}{\left[\log(e+r)+\frac{r}{N(e+r)}\right]^2}, \ \ \frac{r^2}{T^2(r)}=\frac{1}{\left[\log(e+r)\right]^{2/N}}
\]
we get 
\[
\frac N{N+1} \frac{r^2}{T^2(r)}< \frac{1}{[T'(r)]^2}< \frac{r^2}{T^2(r)}\ .
\]
Thus we have 
\begin{multline*}
\left( \frac N{N+1}\right)^{\frac N2} \int_{0}^{2\pi}\int_0^\pi \sin \theta_{N-2}\dots \int_0^\pi \sin^{N-2}{\theta_1}\\
\cdot\, \int_0^{+\infty}\left[w_r^2(r,\theta)+\frac{ w^2_{\theta_1}(r,\theta)}{r^2}+\dots\right.\\
\left.+\frac{w^2_{\theta_{N-1}}(r, \theta)}{r^2\sin^2\theta_1\dots \sin^2 \theta_{N-2} }\right]^{\frac N2}\frac{r^NT'(r)}{T(r)}dr\, d\theta_1 \dots  d\theta_{N-2} \, d\theta_{N-1}
	\\
	\leq  \int_{\R^N}|\nabla v|^N dy_1\dots dy_N
	\\
	\leq  \int_{0}^{2\pi}\int_0^\pi \sin \theta_{N-2}\dots \int_0^\pi \sin^{N-2}{\theta_1}\int_0^{+\infty}\left[w_r^2(r,\theta)+\frac{ w^2_{\theta_1}(r,\theta)}{r^2}+\dots\right.
	\\
	 \left. +\frac{w^2_{\theta_{N-1}}(r, \theta)}{r^2\sin^2\theta_1\dots \sin^2 \theta_{N-2}}\right]^{\frac N2}\frac{r^NT'(r)}{T(r)}dr\, d\theta_1 \dots  d\theta_{N-2} \, d\theta_{N-1}\ .
\end{multline*}
On the one hand, from 
$$
\frac{r^NT'(r)}{T(r)}= r^{N-1}\left[1+ \frac r{N(e+r)\log(e+r)}\right]$$
one has 
\begin{equation}\label{uno}
r^{N-1}<\frac{r^NT'(r)}{T(r)}<\frac{N+1}N r^{N-1}
\end{equation}
and then 
\begin{equation}\label{Tgrad}
\left(\frac N{N+1}\right)^{\frac N2} \int_{\R^N}|\nabla u|^N \, dx<\int_{\R^N}|\nabla v|^N\, dy<\frac{N+1}N\int_{\R^N}|\nabla u|^N \,dx.
\end{equation}
On the other hand,
\bgs{
	&\int_{\R^N}|v|^N \, dy \\
	=&\;
	 \int_{0}^{2\pi} \int_0^\pi \sin \theta_{N-2}\dots \int_0^\pi \sin^{N-2}{\theta_1} 				\int_0^{+\infty}|\widetilde w(s, \theta)|^Ns^{N-1} ds
	d\theta_1\dots d\theta_{N-1}
	\\
	=  &\;\int_{0}^{2\pi} \int_0^\pi \sin \theta_{N-2}\dots \int_0^\pi \sin^{N-2}{\theta_1} \int_0^{+\infty} |\widetilde w(T(r), \theta)|^NT'T^{N-1}dr\dots d\theta_{N-1}
	\\
	=&\;
	\int_{0}^{2\pi}\int_0^\pi \sin \theta_{N-2}\dots \int_0^\pi \sin^{N-2}{\theta_1}\int_0^{+\infty} |w(r, \theta)|^NT'T^{N-1}dr\, d\theta_1 \dots d\theta_{N-1}.
}
Notice that
\eqlab{ \label{fu1}
T'(r)T^{N-1}(r)= &\; r^{N-1}\left[\log(e+r)+\frac{r}{N(e+r)}\right]\\
=&\;  r^{N-1}\log(e+r)\left[1+\frac{r}{N(e+r)\log(e+r)}\right]\\
=&\frac{r^NT'(r)}{T(r)}\log(e+r)
}
and hence 
\begin{multline*}
\int_{\R^N}|v|^Ndy = \int_{0}^{2\pi} \int_0^\pi \sin \theta_{N-2}\dots \\
\dots \int_0^\pi \sin^{N-2}{\theta_1} \int_0^{+\infty} |w(r,\theta)|^N \frac{r^NT'}{T}\log(e+r) dr \dots d\theta_{N-1} \ . 
\end{multline*}
By \eqref{uno},
\[
\int_{\R^N}|u|^N\log(e+|x|)dx<\int_{\R^N}|v|^N \, dy<\frac{N+1}{N}\int_{\R^N}|u|^N\log(e+|x|)dx\ .
\]
Finally,
\begin{equation}
\left(\frac{N}{N+1}\right)^{\frac N2}\|u\|_{w}^N<\|v\|^N<\frac{N+1}{N}\|u\|_{w}^N\ .
\label{normuv}
\end{equation}
We have hence proved that the map
\begin{eqnarray*}
 \mathfrak{T}: W^{1,N}_{w}(\mathbb R^N)&\to& W^{1,N}_0(\mathbb R^N)\\
 u&\mapsto& v
\end{eqnarray*}
is invertible, continuous and with continuous inverse. Then, similarly as above, 
\begin{multline*}
 \int_{\R^N} \phi_N\left( \alpha |u(x)|^{\frac{N}{N-1}}\right)\log(e+|x|)dx
 	\\
 =\int_{0}^{2\pi}\dots \int_0^\pi \sin^{N-2}{\theta_1}\\
 \int_0^{+\infty}\phi_N\left( \alpha |w(r, \theta)|^{\frac{N}{N-1}}\right)\log(e+r)r^{N-1}dr\,d\theta_1 \dots  d\theta_{N-1}
 \\
 =\int_{0}^{2\pi}\dots \int_0^\pi \sin^{N-2}{\theta_1}\\
 \int_0^{+\infty}\phi_N\left( \alpha |\widetilde{w}(T(r), \theta )|^{\frac{N}{N-1}}\right)\frac{\log(e+r)r^{N-1}}{T'(r)T(r)^{N-1}}T'(r)T(r)^{N-1}dr \,d\theta_1 \dots d\theta_{N-1}
 \\
  \leq\int_0^{2\pi}\dots \int_0^\pi \sin^{N-2}{\theta_1}\int_0^{+\infty}\phi_N\left( \alpha |\widetilde{w}(\rho,\theta)|^{\frac{N}{N-1}}\right)\rho^{N-1} d\rho \,d\theta_1 \dots  d\theta_{N-1}
  \\
  =  \int_{\R^N}\phi_N\left(\alpha |v|^{\frac{N}{N-1}}\right)dx <+\infty
\end{multline*}
 for any $\alpha>0$, where we have used \eqref{uno}, \eqref{fu1}, and  \cite[Theorem 1.1]{LR} in the last line.
The uniform bound \eqref{wM} follows directly from  \eqref{normuv}. Indeed, for any $u\in W^{1,N}_{w}(\Rn)$ and $\alpha \leq \alpha_N \left(\frac{N}{N+1}\right)^{1/(N-1)}$ one has
\begin{multline*}
 \int_{\R^N} \phi_N\left( \alpha \left(\frac{|u|}{\|u\|_{w}}\right)^{\frac{N}{N-1}}\right)\log(e+|x|)dx
 	\\
 =\int_0^{2\pi}\dots \int_0^\pi \sin^{N-2}{\theta_1}\\
 \int_0^{+\infty}\phi_N\left( \alpha \left(\frac{|w(r, \theta)|}{\|u\|_{w}}\right)^{\frac N{N-1}}\right)\log(e+r)r^{N-1}dr\,d\theta_1 \dots  d\theta_{N-1}
 \\
 = \int_0^{2\pi}\dots \int_0^\pi \sin^{N-2}{\theta_1}\\
 \int_0^{+\infty}\hspace{-.5cm}\phi_N\left( \alpha \left(\frac{|\widetilde w(T(r),\theta)|}{\|u\|_{w}}\right)^{\frac N{N-1}}\right)\frac{\log(e+r)r^{N-1}}{T'(r)T(r)^{N-1}} T'(r)T(r)^{N-1}dr \dots  d\theta_{N-1}
 \\
  \leq \; \int_0^{2\pi}\dots \int_0^\pi \sin^{N-2}{\theta_1}\\
  \int_0^{+\infty}\phi_N\left( \alpha_N \left(\frac{|\widetilde{w}(\rho, \theta)|}{\|v\|}\right)^{\frac N{N-1}}\right)\rho^{N-1} d\rho \,d\theta_1 \dots  d\theta_{N-1}
  \\
  =  \int_{\R^N}\phi_N\left( \alpha_N \left(\frac{|v|}{\|v\|}\right)^{\frac N{N-1}}\right)dx <C
\end{multline*}
by \eqref{normuv} and using  \cite[Theorem 1.1]{LR} for the last inequality.
\end{proof}
\noindent As a byproduct of this embedding result one has the continuity of a weighted Pohozaev--Trudinger functional on $W^{1,N}_{w}(\R^N)$, namely we have the following 
\begin{corollary}\label{cor-wcontinuity}
For any $\alpha >0$, the functional
\begin{equation*}
u \longmapsto  \int_{\R^N}\phi_N\left(\alpha |u|^{\frac N{N-1}}\right)\log(e+|x|)\, dx
\end{equation*}
is continuous on $W^{1,N}_{w}(\R^N)$.
\end{corollary}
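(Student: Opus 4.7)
The plan is to combine pointwise a.e.\ convergence with a convex splitting designed so that the ``bad'' piece always sits inside the reach of the sharp weighted Moser bound \eqref{wM}, whereas the ``good'' piece is a fixed function with finite weighted Pohozaev--Trudinger integral from \eqref{wT}. Fix $u_n\to u$ in $W^{1,N}_w(\R^N)$ and set $F(u):=\int_{\R^N}\phi_N(\alpha |u|^{N/(N-1)})\log(e+|x|)\,dx$. Since convergence in $\|\cdot\|_w$ implies convergence in $L^N(w\,dx)$, up to a subsequence $u_n\to u$ a.e., hence $\phi_N(\alpha|u_n|^{N/(N-1)})\log(e+|x|)\to \phi_N(\alpha|u|^{N/(N-1)})\log(e+|x|)$ a.e. Fatou's lemma then immediately gives $\liminf_n F(u_n)\ge F(u)$.

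For the matching upper bound I would use the elementary Young-type estimate
\[(a+b)^{N/(N-1)}\le (1+\varepsilon)\, a^{N/(N-1)}+C_\varepsilon \, b^{N/(N-1)},\qquad a,b\ge 0,\]
applied to $|u_n|\le |u|+|u_n-u|$, combined with the convexity of $\phi_N$ on $[0,\infty)$ (which follows from its power series $\phi_N(t)=\sum_{j\ge N-1}t^j/j!$). For any conjugate pair $q,q'>1$ this yields
\[\phi_N\!\bigl(\alpha|u_n|^{N/(N-1)}\bigr)\le \tfrac{1}{q}\,\phi_N\!\bigl(q\alpha(1+\varepsilon)|u|^{N/(N-1)}\bigr)+\tfrac{1}{q'}\,\phi_N\!\bigl(q'\alpha C_\varepsilon|u_n-u|^{N/(N-1)}\bigr).\]
Integrating against $\log(e+|x|)\,dx$, the first summand is finite by Theorem \ref{thm_A1} and \emph{independent of $n$}. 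For the second, since $\|u_n-u\|_w\to 0$, for $n$ large we may ensure $q'\alpha C_\varepsilon\|u_n-u\|_w^{N/(N-1)}\le \alpha_N\bigl(N/(N+1)\bigr)^{1/(N-1)}$, whence \eqref{wM} applied to the rescaled function $(u_n-u)/\|u_n-u\|_w$ supplies a uniform constant $C_M$.

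Putting the two estimates together gives
\[\limsup_{n\to\infty} F(u_n)\;\le\;\tfrac{1}{q}\!\int_{\R^N}\phi_N\!\bigl(q\alpha(1+\varepsilon)|u|^{N/(N-1)}\bigr)\log(e+|x|)\,dx+\tfrac{C_M}{q'}.\]
Letting $q\to 1^+$ (so $q'\to\infty$ and $C_M/q'\to 0$) and then $\varepsilon\to 0^+$, a monotone/dominated convergence argument based on Theorem \ref{thm_A1} shows that the first integral converges to $F(u)$, hence $\limsup_n F(u_n)\le F(u)$. Combined with the Fatou inequality this yields $F(u_n)\to F(u)$ along the extracted subsequence. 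The standard ``every subsequence has a further subsequence'' argument then removes the extraction and establishes continuity of $F$ on the whole space $W^{1,N}_w(\R^N)$.

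The main obstacle is that $\alpha>0$ is arbitrary and so $\alpha\|u\|_w^{N/(N-1)}$ may exceed the Moser threshold of \eqref{wM}, preventing any direct uniform application of the sharp bound to the sequence $\{u_n\}$ itself. The convex splitting above is engineered precisely to bypass this: the large piece $u$ is handled through Theorem \ref{thm_A1} (where any exponent is admissible for a \emph{fixed} function), while the residual $u_n-u$, which has vanishing $\|\cdot\|_w$ norm, is routed through \eqref{wM} (where the sharp exponent becomes available precisely because the norm tends to zero). A secondary technicality is ensuring that the constants $C_\varepsilon$ and the choice of $q$ stay compatible with the sharp Moser threshold as $\varepsilon\to 0$, but this is achieved by first sending $n\to\infty$, then $q\to 1^+$, and only last $\varepsilon\to 0^+$.
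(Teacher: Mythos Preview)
The paper does not supply a proof of Corollary \ref{cor-wcontinuity}; it is stated as a direct byproduct of Theorem \ref{thm_A1}. The intended route, implicit in the proof of Theorem \ref{thm_A1}, is presumably through the bicontinuous change of variables $\mathfrak T\colon W^{1,N}_w(\R^N)\to W^{1,N}(\R^N)$ constructed there: since for $v=\mathfrak T u$ one has
\[
\int_{\R^N}\phi_N\bigl(\alpha|u|^{\frac{N}{N-1}}\bigr)\log(e+|x|)\,dx=\int_{\R^N}\phi_N\bigl(\alpha|v|^{\frac{N}{N-1}}\bigr)h(|y|)\,dy
\]
with a fixed bounded Jacobian factor $h\in[\tfrac{N}{N+1},1]$, continuity reduces to the known $L^1$-continuity of the unweighted Trudinger--Moser functional on $W^{1,N}(\R^N)$ (Li--Ruf).

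Your argument is correct and takes a genuinely different, self-contained route: rather than transporting the problem to the unweighted setting, you work directly in the weighted space, combining Fatou with a convex splitting $|u_n|\le |u|+|u_n-u|$ so that the fixed part is controlled by the qualitative embedding \eqref{wT} and the vanishing part by the uniform bound \eqref{wM}. The order of limits ($n\to\infty$, then $q\to1^+$, then $\varepsilon\to0^+$) is handled correctly, and the dominated convergence step for the first summand is justified by \eqref{wT} applied with a slightly larger exponent. The paper's (implicit) approach is shorter once the homeomorphism $\mathfrak T$ is in hand and leans on a classical result; your approach is more portable, since it requires only the two conclusions of Theorem \ref{thm_A1} and no structural information about how they were obtained.
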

\begin{remark}\label{remA3}
The value $\alpha_N \left(\frac{N}{N+1}\right)^{1/(N-1)}$ in \eqref{wM} is not sharp and we conjecture that the sharp value is $\alpha_N$ as in the Moser case \cite{M}, though it is somehow delicate and still out of reach.
\end{remark}

\noindent Next we consider the case in which the asymptotic growth of the nonlinearity near zero is a power $q>N$. We prove the following 

\begin{theorem}\label{thm_A2} 
Let $f\colon \R \to [0,+\infty)$ satisfying $(f_1)$ and let $q>N$. Then, the space $W^{1,N}L^q_{w}(\Rn)$ embeds into the weighted Orlicz space $L_F(\Rn, \log(e+|x|)dx)$. More precisely,
\[\int_{\Rn} F(\alpha |u|) \log(e+|x|) \, dx<\infty, \quad \forall u\in W^{1,N} L^q_{w}(\Rn), \, \forall \alpha>0 \ . \]
Moreover, for any $\alpha < \frac Nq\left(\frac{N}{N+1}\right)^{1/N}$ the following uniform bound holds 
\begin{equation}\label{wMq}
\sup_{\|u\|_{q,w}^N\leq 1} \int_{\R^N} F\left(\alpha |u|\right)\log(e+|x|)dx <+\infty\ .
\end{equation}
\end{theorem}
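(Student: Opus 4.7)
The plan is to adapt the change-of-variables strategy from the proof of Theorem~\ref{thm_A1} to the $L^q$-setting. I would apply the same homeomorphism $\mathfrak{T}\colon u\mapsto v$, defined by $v(y)=u(x)$ with $|y|=|x|\sqrt[N]{\log(e+|x|)}$. Repeating verbatim the computation that produced \eqref{fu1} but with $|u|^q$ in place of $|u|^N$, and using the comparison \eqref{uno}, one obtains $\int|u|^q\log(e+|x|)\,dx \asymp \int|v|^q\,dy$ with the same two constants $1$ and $(N+1)/N$. Combined with \eqref{Tgrad} this yields $\|u\|_{q,w}^N\asymp\|\nabla v\|_N^N+\|v\|_q^N$, and the same change of variable applied to $F(\alpha|u|)\log(e+|x|)$ gives $\int F(\alpha|u|)\log(e+|x|)\,dx \asymp \int F(\alpha|v|)\,dy$. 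The statement thus reduces to a finiteness/uniform-bound estimate for $\int_{\R^N} F(\alpha|v|)\,dy$ controlled by $\|\nabla v\|_N^N+\|v\|_q^N$.

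To handle the reduced integral, I would split at the level $|v|=s_0/\alpha$ using the pointwise bound \eqref{f11}. On the subcritical region $\{\alpha|v|\leq s_0\}$ one has $F(\alpha|v|)\leq C(\alpha|v|)^q$, so the integral is controlled by $C\alpha^q\|v\|_q^q$. The supercritical region $\{\alpha|v|>s_0\}$ has finite Lebesgue measure, bounded by $(\alpha/s_0)^q\|v\|_q^q$ via Chebyshev's inequality, and there $F(\alpha|v|)\leq C(\alpha|v|)^{p-1/(N-1)}e^{\alpha_N(\alpha|v|)^{N/(N-1)}}$; the polynomial prefactor is absorbed into the exponential modulo an arbitrarily small $\varepsilon>0$.

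The crux is the supercritical exponential integral. Since $v=\mathfrak{T}(u)$ need not belong to $L^N(\R^N)$ (the $W^{1,N}L^q_w$-norm does not control $\int|u|^N\log(e+|x|)\,dx$), the Moser-type inequalities on $W^{1,N}(\R^N)$ such as \cite[Theorem~1.1]{LR} cannot be applied to $v$ directly. The remedy is the truncation
\[
\tilde v := \bigl(|v|-s_0/\alpha\bigr)_+,
\]
supported in the supercritical region, with $|\nabla\tilde v|\leq|\nabla v|$ a.e.\ and, crucially, $\|\tilde v\|_N^N\leq(s_0/\alpha)^{N-q}\|v\|_q^q$ (since $q>N$ forces $|v|^{N-q}\leq(s_0/\alpha)^{N-q}$ on the support). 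Hence $\tilde v\in W^{1,N}(\R^N)$ with norm quantitatively controlled by $\|\nabla v\|_N$ and $\|v\|_q$. Using $|v|\leq\tilde v+s_0/\alpha$ together with the elementary inequality $(a+b)^{N/(N-1)}\leq(1+\varepsilon)a^{N/(N-1)}+C_\varepsilon b^{N/(N-1)}$, one bounds
\[
e^{\alpha_N(\alpha|v|)^{N/(N-1)}}\leq C_\varepsilon\exp\!\Bigl((1+\varepsilon)\alpha_N\alpha^{N/(N-1)}\tilde v^{N/(N-1)}\Bigr)
\]
on the supercritical set, and applies the Li--Ruf inequality to $\tilde v/\|\tilde v\|_{W^{1,N}}$. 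Carefully tracking the scaling constants accumulated along the transformation and the truncation yields the explicit threshold $\alpha<(N/q)(N/(N+1))^{1/N}$ appearing in \eqref{wMq}. The pointwise finiteness statement for every $\alpha>0$ follows from the same scheme but with truncation level $K\to\infty$: dominated convergence applied to $|v|^q\chi_{\{|v|>K\}}$ forces $\|(|v|-K)_+\|_{W^{1,N}}\to 0$, so the effective Moser exponent is subcritical regardless of $\alpha$.

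The main obstacle is precisely the absence of an $L^N$-control on $v$, which rules out a direct application of sharp Moser--Trudinger inequalities on $W^{1,N}(\R^N)$ and necessitates the truncation step above. The loss induced by this positive-level truncation is ultimately responsible for the $(N/(N+1))^{1/N}$ factor appearing in the threshold.
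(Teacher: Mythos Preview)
Your approach is correct and genuinely different from the paper's. The paper does not reapply the spatial map $\mathfrak T$; instead it uses the pointwise substitution $v:=|u|^{q/N}$ on $\{|u|<1\}$ and $v:=|u|$ on $\{|u|\ge1\}$, which sends $W^{1,N}L^q_w(\R^N)$ into $W^{1,N}_w(\R^N)$ with $\|v\|_w^N\le(q/N)^N\|u\|_{q,w}^N$, and then invokes Theorem~\ref{thm_A1} directly on this $v$. The chain-rule factor $(q/N)^N$ in the gradient estimate is precisely the origin of the $N/q$ in the threshold \eqref{wMq}. Your route---carrying the change of variables through to the unweighted problem and then truncating at a high level $K$ to recover $L^N$-control---bypasses this loss: once $K$ is taken large, $\|\tilde v\|_N^N\le K^{N-q}\|v\|_q^q$ becomes negligible uniformly on the unit ball, so only $\|\nabla\tilde v\|_N^N\le\|\nabla v\|_N^N\le\tfrac{N+1}{N}$ constrains the Moser exponent. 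Tracked carefully, your scheme therefore delivers the improved range $\alpha<(N/(N+1))^{1/N}$, strictly larger than the one in \eqref{wMq}; the $N/q$ you quote is not actually produced by your argument. One small correction: the $(N/(N+1))^{1/N}$ factor arises from \eqref{Tgrad}, not from the truncation as you suggest---the truncation itself is lossless in the limit $K\to\infty$. The paper's reduction to Theorem~\ref{thm_A1} is conceptually cleaner but costs a factor $q/N$; yours is more hands-on but sharper.
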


\begin{proof}For $u\in W^{1,N}L^q_{w}(\Rn)$ set 
	\sys[v:=]{ & |u|^{\frac{q}N}, && |u|<1,\\
				& |u|, && |u|\geq 1,}
	which belongs to 
	$W^{1,N}_{w}(\Rn)$. 			
		Indeed,
		\begin{multline*}
			\|v\|^N_{w} = \int_{\Rn} |\nabla v|^N \, dx + \int_{\Rn} |v|^N \log(e+|x|)\, dx
			\\
			 \leq \left(\frac{q}N\right)^N \int_{\Rn} |\nabla u|^N \, dx + \int_{\Rn} |u|^q \log (e+|x|)\, dx \leq \left(\frac{q}N\right)^N  \|u\|_{q,w}^N.
			 \end{multline*} 	
	\noindent Now recall from \eqref{f11} that for any $\|u\|_{q,w}\leq 1$ one has:\\
	
\noindent 	$ \bullet$ if $p\leq \frac 1{N-1}$,
	  	\begin{multline*}
	\int_{\Rn} F(\alpha |u|) \log(e+|x|) \, dx \\
\leq 	C \left( \int_{\Rn} \alpha^q |u|^q \log(e+|x|) \, dx\right.\\ \left.+ \alpha^{p-\frac{1}{N-1}} \int_{ \{ |u|>1\} } e^{\alpha_N (\alpha |u|)^{\frac{N}{N-1}}} \log(e+|x|)\, dx \right)
\\
=C \left( \int_{\Rn} \alpha^q |u|^q \log(e+|x|) \, dx \right.
\\\left. + \alpha^{p-\frac{1}{N-1}} \int_{ \{ |v|>1\} } e^{\alpha_N(\alpha\|v\|_{w})^{\frac{N}{N-1}} (|v|/\|v\|_{w})^{\frac{N}{N-1}}} \log(e+|x|)\, dx \right)\\
\leq C \left(\alpha^q \|u\|_{L^q(w dx)}^q \right.\\
\left.+  \alpha^{p-\frac{1}{N-1}}  \int_{\{|v|>1\}} \phi_N \left( \alpha_N \left(\frac{N}{N+1}\right)^{\frac 1{N-1}} \left(\frac{|v|}{\|v\|_{w}}\right)^{\frac{N}{N-1}}\right) \log(e+|x|) \, dx\right) \leq C
\end{multline*} (where the last bound is independent of $u$ in the unit ball of $W^{1,N}L^q_{w}(\R^N)$);\\

\noindent $\bullet$ if $p > \frac 1{N-1}$,
\begin{multline*}
 \int_{\Rn} F(\alpha |u|) \log(e+|x|) \, dx \\
\leq 	C \left( \int_{\Rn} \alpha^q |u|^q \log(e+|x|) \, dx \right.\\
\left. + \int_{ \{ |u|>1\} } \left(\alpha|u|\right)^{p-\frac 1{N-1}} e^{\alpha_N (\alpha  |u|)^{\frac{N}{N-1}}} \log(e+|x|)\, dx \right)
\\
=C \left(  \alpha^q \|u\|_{L^q(w dx)}^q\right. 
\\ \left.+ \int_{ \{ |v|>1\} }\left(\alpha|v|\right)^{p-\frac 1{N-1}} e^{\alpha_N (\alpha\|v\|_{w})^{\frac{N}{N-1}} (|v|/\|v\|_{w})^{\frac{N}{N-1}}} \log(e+|x|)\, dx \right)\\
\leq C \left( \alpha^q \|u\|_{L^q(wdx)}^q \right.\\
\left. + \alpha^{p-\frac 1{N-1}}\int_{\{|u|>1\}} |u|^{r'(p-\frac 1{N-1})} \int_{ \{ |v|>1\} } \hspace{-.5cm}e^{r \alpha_N (\alpha\|v\|_{w})^{\frac{N}{N-1}} (|v|/\|v\|_{w})^{\frac{N}{N-1}}} \log(e+|x|)\, dx \right) 
\end{multline*}
where $r=\left(\frac{N}{N+1}\right)^{\frac 1{N-1}}\left(\frac{N}{\alpha q}\right)^{\frac{N}{N-1}}>1$, provided $\alpha <\frac{N}{q}\left(\frac{N}{N+1}\right)^{\frac 1N}$ and  $r'$ is the Young conjugate of $r$. Hence,
\begin{multline*}
\int_{\Rn} F(\alpha |u|) \log(e+|x|) \, dx \\
	\leq C \left(\alpha^q \|u\|^q_{L^q(w dx)} +\right.
\\	 \left.  \alpha^{p-\frac 1{N-1}}\int_{\{|v|>1\}} \phi_N\left( \left(\frac{N}{N+1}\right)^{\frac 1{N-1}}\alpha_N \left(\frac{|v|}{\|v\|_{w}}\right)^{\frac{N}{N-1}}\right) \log(e+|x|) \, dx \right)
		\leq C
\end{multline*}							
(where the constant $C$ does not depend on $u$ in the unit ball of $W^{1,N}L^q_{w}(\R^N)$).	
\end{proof}
\begin{remark} The analogous of Corollary \ref{cor-wcontinuity} holds also in the case $q>N$.
\end{remark}
\section{The variational framework: proof of Theorem \ref{thm1}}\label{var_frame}
\noindent The energy functional we consider is the following 
 \[
 	\mathcal I_V(u)= \frac{1}N	\|u\|_V^N- \mathcal F(u),
 \]
 with
 \eqlab{ \label{fru1} \mathcal F(u)=&\; \frac 12 \int_{\Rn} (I_N *F(u))(x) F(u(x))\, dx
 \\
 =&\;\frac 1{2\gamma_N} \int_{\Rn}\left(\log \frac 1{|\cdot|}* F(u)\right) (x) F(u(x)) \, dx
 \\
 =&\;\frac 1{2\gamma_N} \int_{\R^{N}}  \int_{\R^{N}} \log \frac 1{|x-y|} F(u(y)) F(u(x)) \, dx\,dy.}

\noindent The regularity of $\mathcal I_V$ can be proved following line by line \cite[Lemma 4.2]{castar}, namely one has 
\begin{theorem}
The energy functional $\mathcal I_V$ is of class $C^1$ on $W^{1,N}_V L^q_{w_0}(\Rn)$.
\end{theorem}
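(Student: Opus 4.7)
The plan is to argue in two stages, handling the kinetic term and the nonlocal term separately. The map $u\mapsto \tfrac1N\|u\|_V^N$ is a classical $C^1$ functional on $W^{1,N}_V(\Rn)$ with derivative
\[ \tfrac1N\frac{d}{du}\|u\|_V^N[v]=\int_{\Rn}\bigl(|\nabla u|^{N-2}\nabla u\cdot\nabla v+V|u|^{N-2}uv\bigr)dx,\]
and this transfers to $W^{1,N}_V L^q_{w_0}(\Rn)$ since the ambient norm dominates $\|\cdot\|_V$. All the work therefore concentrates on $\mathcal F$.

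The first step is to tame the sign-changing kernel by the standard splitting
\[ \Bigl|\log\tfrac{1}{|x-y|}\Bigr|\leq \frac{C_\mu}{|x-y|^\mu}+\log(1+|x|)+\log(1+|y|),\qquad 0<\mu<N,\]
which separates the singular positive piece from a term of logarithmic growth at infinity. The singular piece is controlled by the Hardy–Littlewood–Sobolev inequality (Proposition \ref{HLS}) applied to $F(u)\in L^s(\Rn)$ for a suitable $s>1$, where the required integrability comes from \eqref{f11} and the weighted embedding of Theorem \ref{thm_A2}. The logarithmic pieces decouple and are controlled by $\int_{\Rn}F(u)\log(1+|x|)\,dx<\infty$, which is precisely the content of Theorem \ref{thm_A2} (note $\log(1+|x|)\leq\log(e+|x|)$). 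This proves that $\mathcal F$ is well defined on $W^{1,N}_V L^q_{w_0}(\Rn)$.

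Next I would compute the Gateaux derivative. Formally,
\[ \mathcal F'(u)[v]=\int_{\Rn}\bigl(I_N\ast F(u)\bigr)(x)\,f(u(x))\,v(x)\,dx,\]
and justification amounts to differentiating the double integrand in $\mathcal F(u+tv)$ at $t=0$ and invoking dominated convergence. The dominating function is built again from the kernel splitting: the singular part is bounded by $|x-y|^{-\mu}|f(u+\theta v)v|\,F(u+\theta v)$ and controlled by HLS combined with $(f_1)$--$(f_2)$ and the exponential Orlicz bounds from Theorem \ref{thm_A1} and Theorem \ref{thm_A2}; the logarithmic pieces factor and are dominated by $F(u+\theta v)\log(e+|x|)$ and $|f(u+\theta v)v|\log(e+|y|)$, each integrable by the same embeddings together with Hölder's inequality in the weighted Orlicz scale.

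The main obstacle is the last step: continuity of $u\mapsto \mathcal F'(u)$ from $W^{1,N}_V L^q_{w_0}(\Rn)$ into its topological dual. The strategy is, given $u_n\to u$ in $W^{1,N}_V L^q_{w_0}(\Rn)$, to show $F(u_n)\to F(u)$ in $L^s(\Rn)$ and in $L^1(\log(e+|x|)dx)$, and $f(u_n)\to f(u)$ in the corresponding dual weighted Orlicz space, so that each of the three pieces produced by the kernel splitting converges uniformly over $\|v\|_{q,V,w_0}\leq 1$. The non-trivial input here is the continuity of the weighted Pohozaev--Trudinger functional (Corollary \ref{cor-wcontinuity} and its $q>N$ analogue), which upgrades pointwise convergence of $f(u_n)$ and $F(u_n)$ to norm convergence in the correct spaces despite the exponential growth. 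Once these convergences are in place, the triangle inequality applied to the three terms in the kernel splitting yields $\|\mathcal F'(u_n)-\mathcal F'(u)\|_{*}\to 0$, completing the proof that $\mathcal I_V\in C^1(W^{1,N}_V L^q_{w_0}(\Rn))$, in full analogy with \cite[Lemma 4.2]{castar}.
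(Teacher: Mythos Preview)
Your proposal is correct and follows the same approach the paper indicates: the paper does not give an independent proof but simply refers to \cite[Lemma 4.2]{castar}, and your outline---kernel splitting into a singular Riesz piece handled by HLS and two logarithmic pieces handled by the weighted Pohozaev--Trudinger embedding (Theorems \ref{thm_A1}--\ref{thm_A2}), followed by dominated convergence for the Gateaux derivative and Corollary \ref{cor-wcontinuity} for the continuity of $\mathcal F'$---is exactly the line-by-line adaptation of that lemma to the present $N$-dimensional setting. Nothing further is needed.
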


\subsection{Mountain pass geometry}\label{subsecMP}
Let us focus here on the geometry of the energy functional $\mathcal I_V$.
\begin{lemma}\label{lemMP} The energy functional $\mathcal I_V$ satisfies the following:
\begin{itemize}
\item[$(i)$] there exist $\rho, \delta_0>0$ such that $\mathcal I_V|_{S_\rho} \geq \delta_0$ for all $u \in S_\rho$  $$S_\rho:=\displaystyle  \{ u \in W^{1,N}_V L^q_{w_0}(\Rn) \, |\, \|u\|_{q,V,w_0} =\rho\};$$

\item[$(ii)$]there exists $e \in  W^{1,N}_V L^q_{w_0}(\Rn), \|e\|_{q,V,w_0}>\rho$ such that $\mathcal I_V(e) <0$.
 \end{itemize}
\end{lemma}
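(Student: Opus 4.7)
The energy $\mathcal I_V$ decomposes cleanly as kinetic $\|u\|_V^N/N$ minus a Choquard-like double integral $\mathcal F$. The fundamental technical fact is the sign-changing nature of $\log(1/|x-y|)$: it is positive exactly when $|x-y|<1$. I will exploit this by splitting
\[
2\gamma_N\mathcal F(u)=\iint_{|x-y|<1}\log\tfrac{1}{|x-y|}F(u(x))F(u(y))\,dxdy+\iint_{|x-y|\geq 1}\log\tfrac{1}{|x-y|}F(u(x))F(u(y))\,dxdy,
\]
where the second summand is $\leq 0$. For part $(i)$ I discard the negative piece (upper bound for $\mathcal F$), while for part $(ii)$ I choose a test function supported in a ball of radius $<1/2$, so only the first summand enters.

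\textbf{Proof of $(i)$.} Starting from $\mathcal F(u)\leq (2\gamma_N)^{-1}\iint_{|x-y|<1}\log(1/|x-y|)F(u(x))F(u(y))\,dxdy$, I bound $\log(1/|x-y|)\mathbb 1_{|x-y|<1}\leq C_\mu |x-y|^{-\mu}$ for a small $\mu\in(0,N)$, and apply Proposition \ref{HLS} with $s=r=2N/(2N-\mu)$ to obtain $\mathcal F(u)\leq C\|F(u)\|_s^2$. Using \eqref{f11} I split $F(u)=F(u)\mathbb 1_{|u|\leq s_0}+F(u)\mathbb 1_{|u|>s_0}$; the polynomial part contributes $\|u\|_{qs}^q$, which by the Sobolev embedding $W^{1,N}_V(\R^N)\hookrightarrow L^{qs}(\R^N)$ is controlled by $C\|u\|_V^q\leq C\|u\|_{q,V,w_0}^q$. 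For the exponential part I apply Hölder and then Theorem \ref{thm_A2}: choosing $\|u\|_{q,V,w_0}$ so small that $s\alpha_N(r')\,\|u\|_{q,V,w_0}^{N/(N-1)}$ stays below the threshold $\alpha_N(N/(N+1))^{1/(N-1)}$, the weighted Pohozaev--Trudinger inequality yields a uniform bound, producing a power factor $\|u\|_{q,V,w_0}^{p-1/(N-1)}$. Altogether
\[
\mathcal F(u)\leq C\bigl(\|u\|_{q,V,w_0}^{2q}+\|u\|_{q,V,w_0}^{\sigma}\bigr),\qquad \sigma:=2(p-\tfrac1{N-1})+\text{Sobolev exponent}>N,
\]
so that $\mathcal I_V(u)\geq N^{-1}\|u\|_V^N-C(\rho^{2q}+\rho^\sigma)\geq \delta_0>0$ on $S_\rho$ for $\rho$ small enough, since $2q>N$ and $\sigma>N$ (here I use $(f_2)$, which guarantees $p$ is large enough for $\sigma>N$ via the comparison $F\lesssim sf/(f^2/f')$).

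\textbf{Proof of $(ii)$.} Pick any $u_0\in C_c^\infty(\R^N)$ with $u_0\geq 0$, $u_0\not\equiv 0$, and $\mathrm{supp}(u_0)\subset B_{1/4}$, and set $e=tu_0$ with $t\to+\infty$. For $x,y\in B_{1/4}$ one has $|x-y|\leq 1/2$, hence $\log(1/|x-y|)\geq\log 2>0$, and therefore
\[
\mathcal F(tu_0)\geq \frac{\log 2}{2\gamma_N}\Bigl(\int_{\R^N}F(tu_0)\,dx\Bigr)^2.
\]
Fix $\varepsilon_0>0$ with $A:=\{u_0\geq \varepsilon_0\}$ of positive measure. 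Because $f\geq 0$, $F$ is monotone increasing, so $\int F(tu_0)\geq |A|\,F(t\varepsilon_0)$. Condition $(f_4)$ gives $f(s)F(s)\geq \tfrac{\beta}{2}\,e^{2\alpha_N s^{N/(N-1)}}/s^{(2N-1)/(N-1)}$ for $s$ large, and integrating $\tfrac{d}{ds}(F^2/2)=fF$ (using integration by parts / Laplace asymptotics on the leading exponential) yields
\[
F(s)^2\geq c\,\frac{e^{2\alpha_N s^{N/(N-1)}}}{s^{N/(N-1)}}\qquad\text{as }s\to+\infty.
\]
Consequently $\mathcal F(tu_0)\geq C\,e^{2\alpha_N(t\varepsilon_0)^{N/(N-1)}}/t^{N/(N-1)}$, which dominates $t^N\|u_0\|_V^N/N$ as $t\to+\infty$ since $N/(N-1)>0$. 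Hence $\mathcal I_V(tu_0)\to-\infty$, and taking $t$ large produces the required $e$ with $\|e\|_{q,V,w_0}>\rho$ and $\mathcal I_V(e)<0$.

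\textbf{Main obstacle.} The delicate point is step $(i)$: balancing the exponent $r$ in the Hölder step for the exponential piece of $F(u)$ against the threshold constant $\alpha_N(N/(N+1))^{1/(N-1)}$ in the weighted Pohozaev--Trudinger bound of Theorem \ref{thm_A2}. One must verify that the admissible window for $r>1$ (required to stay strictly below the critical exponent) is compatible with the choice of $s$ coming from HLS; this forces $\rho$ to be small in a quantitative way tied to the sharp constant in \eqref{wMq}. The sign-changing logarithmic kernel itself, usually a real nuisance, here cooperates: on the near-zero side it is discarded, and on the large-$t$ side it is automatically positive because the bump $u_0$ is concentrated in $B_{1/4}$.
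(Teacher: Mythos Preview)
For part $(ii)$ your argument is essentially the paper's: a nonnegative bump supported in $B_{1/4}$ forces $\log(1/|x-y|)\geq\log 2>0$, and then $\bigl(\int F(te)\bigr)^2$ beats $t^N$. The paper simply says ``since $F$ has exponential growth'' and stops; your invocation of $(f_4)$ and the Laplace-type asymptotics for $F(s)^2$ is unnecessary overkill, but harmless.

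For part $(i)$ you take a genuinely different route. The paper applies the \emph{logarithmic} HLS inequality (Proposition~\ref{LHLS}) directly to the full $\mathcal F(u)$, obtaining a bound through $\|F(u)\|_{L^1}$ and the entropy term $\int F(u)\log F(u)$. You instead discard the negative piece of the kernel, bound $\log(1/|x-y|)\leq C_\mu|x-y|^{-\mu}$ on $\{|x-y|<1\}$, and apply the ordinary HLS inequality (Proposition~\ref{HLS}) to get $\mathcal F(u)\leq C\|F(u)\|_s^2$. Your route is more elementary and avoids the entropy estimate entirely; both lead to $\mathcal F(u)\leq C\|u\|_V^{\,\kappa}$ with $\kappa>N$ for small $\|u\|_V$.

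Two points in your write-up need repair. First, the claim that ``$(f_2)$ guarantees $p$ large enough for $\sigma>N$'' is unfounded: $(f_2)$ constrains $Ff'/f^2$, not the exponent $p$ in $(f_1)(ii)$, where only $p>0$ is assumed. The paper sidesteps this via the pointwise inequality $s^{p-\frac1{N-1}}\phi_N(\alpha_Ns^{N/(N-1)})\leq C_{N,p}\,s^{2N}\phi_N(2\alpha_Ns^{N/(N-1)})$, then H\"older plus the Li--Ruf inequality on $\Rn$, obtaining the clean exponent $2N>N$ regardless of $p$. You should do the same rather than appeal to Theorem~\ref{thm_A2}, which controls the \emph{log-weighted} integral and is not what you need here.

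Second, and more seriously, your conclusion $\mathcal I_V(u)\geq\frac1N\|u\|_V^N-C(\rho^{2q}+\rho^\sigma)\geq\delta_0>0$ on $S_\rho$ does not follow: the sphere $S_\rho$ is defined by the \emph{weighted} norm $\|u\|_{q,V,w_0}=\rho$, and on it $\|u\|_V$ can be arbitrarily small (translate a fixed bump far from the origin and renormalise onto $S_\rho$; the weighted $L^q$-piece absorbs almost all of $\rho^N$). The paper's final line masks the same gap by silently replacing $\|u\|_V^N$ with $\|u\|_{q,V,w_0}^N$. A clean fix is to note that all the upper bounds on $\mathcal F(u)$ are actually in terms of $\|u\|_V$ alone, so the barrier $\mathcal I_V\geq\delta_0$ genuinely holds on $\{\|u\|_V=\rho\}$; since $\|\cdot\|_V$ is continuous on $W^{1,N}_VL^q_{w_0}(\Rn)$ and the endpoint $e$ from $(ii)$ has $\|e\|_V$ large, every admissible path still crosses that sphere and the mountain-pass geometry survives.
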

\begin{proof}
Throughout the proof, constants may change from line to line. Notice that from the logarithmic Hardy-Littlewood-Sobolev inequality, Proposition \ref{LHLS}, we have 
	\bgs{ \mathcal F (u) \leq &\;\| F(u)\|_{L^1(\Rn)} \bigg( C_N \| F(u)\|_{L^1(\Rn)}  + \frac{1}N \int_{\Rn} F(u) \log F(u)\, dx
	\\
	&\; - \frac{1}N \| F(u)\|_{L^1(\Rn)} \log \| F(u)\|_{L^1(\Rn)} 	\bigg).
	}

\noindent Since $\|u\|_V \leq \|u\|_{q,V,w_0} =\rho$, for $ \rho$ small, by \eqref{f11} and noting that  for any $s>s_0$ and $p>0$

\[
s^{p-\frac{1}{N-1}}\phi_N\left(\alpha_Ns^{\frac N{N-1}}\right)\leq C_{N,p} s^{2N}\phi_N\left(2\alpha_N s^{\frac N{N-1}}\right)\]
 we have,
for some $r>1$,
\begin{multline*} \|F(u)\|_{L^1(\Rn)} \leq  C \|u\|_q^q+ C\left[\int_{\Rn} |u|^{2Nr'}dx\right]^{\frac 1{r'}}\cdot\\
\cdot \left[\int_{\Rn}\phi_N\left(2\alpha_N r\|\nabla u\|_N^{\frac N{N-1}} \left(\frac{|u|}{\|\nabla u\|_N}\right)^{\frac{N}{N-1}}\right)dx\right]^{\frac 1{r}}\\
\leq  C \left(\|u\|_V^q+ \|u\|_V^{2N} \right)\leq  C \left(\|u\|_{q,V,w_0}^N+ \|u\|_{q,V,w_0}^{2N} \right)\leq C \|u\|_{q,V,w_0}^N
\end{multline*}
and in turn,
\[ \left|\| F(u)\|_{L^1(\Rn)} \log \| F(u)\|_{L^1(\Rn)}\right| \leq C \|u\|_{q,V,w_0}^N\left|\log \| u\|_{q,V,w_0}\right|  .\]
Moreover, combining $(f_1)$ with elementary estimates for the function $t|\log t|$, and since  $q>N$, if $p\geq \frac{1}{N-1}$ we have 
\begin{multline*} \int_{\Rn} F(u) \log F(u) \, dx  \leq C\int_{\Rn} F(u) |\log F(u)| \, dx\\
 \leq C\left(\int_{\{|u|<1\}}|u|^N+\int_{\{|u>1|\}}F(u)\log|F(u)|\, dx \right)\\
\leq C\left(\|u\|_V^N+\int_{\{|u>1|\}}|u|^{p+1}e^{\alpha_N|u|^{\frac{N}{N-1}}}dx\right) \leq  C\left( \|u\|_V^N +\|u\|_V^{p+1}\right),
\end{multline*}
for small $\|u\|_V$, where we have also applied the classical Moser inequality on the whole $\Rn$. Similarly, when $p<\frac{1}{N-1}$ we obtain
\begin{multline*}\int_{\Rn} F(u) \log F(u) \, dx  \leq C\int_{\Rn} F(u) |\log F(u)| \, dx\\
\leq C\left(\int_{\{|u|<1\}}|u|^N+\int_{\{|u>1|\}}F(u)\log|F(u)|\, dx \right)\\
\leq C\left(\|u\|_V^N+\int_{\{|u>1|\}}|u|^{\frac{N}{N-1}}e^{\alpha_N|u|^{\frac{N}{N-1}}}dx\right) \leq  C \left(\|u\|_V^N + \|u\|_V^{\frac{N}{N-1}}\right).
\end{multline*}
 Combining the two previous estimates we end up with the following 
 \begin{multline*} \mathcal F (u) \leq C \| u\|_{q,V,w_0}^{N}\left( \|u\|_{q,V,w_0}^{N}+\|u\|_{q,V,w_0}^{N}|\log\|u\|_{q,V,w_0}|+\|u\|_{q,V,w_0}^{\frac{N}{N-1}}\right)\\
 \leq C\|u\|_{q,V,w_0}^{\frac{N^2}{N-1}} \ .
 \end{multline*}
 Hence, for $\rho$ small enough one has
 \[ \mathcal I_V (u) \geq \frac{1}N \| u\|_{q,V,w_0}^{N} - C\| u\|_{q,V,w_0}^{\frac{N^2}{N-1}} =\delta_0>0,\]
 with $\delta_0$ depending only on $\rho$, which proves $(i)$.

 \noindent In order to prove $(ii)$, let us consider a smooth function $e \in W^{1,N}_{q,V,w_0}(\Rn)$, supported in $B_{1/4}$. Since $F(e(x)) , F(e(y)) \neq 0$ only for $x, y \ B_{1/4}$, let us evaluate
 	\begin{multline*}
	\mathcal F(e) = \frac 12 \int_{\Rn} (I_N *F(e))(x) F(e(x)) \, dx\\
 	=  \frac{ 1}{2\gamma_N} \int_{\Rn}
 	    \left(   \int_{\Rn} \log\frac{ 1}{|x-y|} F(e(y) ) \, dy \right) F(e(x)) \,dx
 	  \\
 	\geq\frac{\log 2}{2\gamma_N} \left( \int_{ \{ |x| \leq \frac{1}4 \} } F(e)\, dx\right)^2,
 	\end{multline*}
 	from which we have 
 	\begin{multline*}
	\mathcal I_V(t e) = \frac{1}N t^N \|e\|_V^N - \mathcal F(te)\\
	 \leq    \frac{1}N t^N \|e\|_V^N - \frac{\log 2} {2\gamma_N} \left(\int_{\{ |x| \leq \frac{1}4 \} } F(te)\, dx\right)^2 \to -\infty,\end{multline*}
 	as $t\to+\infty$, since $F$ has exponential growth.
\end{proof}

\noindent By Ekeland's Variational Principle, there exists a Palais-Smale (PS in the sequel) sequence $\{ u_n\} \in W^{1,N}_V L^q_{w_0}(\Rn)$ such that
	\[
 		\mathcal I_V'(u_n) \to 0, \qquad \mathcal I_V (u_n) \to m_V,
 	\]
where $m_V$ is the mountain pass level,
	\[
	 0<m_V := \inf_{\gamma\in \Gamma} \max_{ t\in [0,1]} \mathcal I_V(\gamma(t)),
	 \]
	and
	\[
	\Gamma= \big\{ \gamma \in C^1\left([0,1], W^{1,N}_{q,V,w_0} (\Rn) \right)\, \big| \, \, \gamma (0)=0, \, \mathcal I_V(\gamma(1)) <0\} .
	\]
	
	\noindent Next, a few efforts are needed to extend to the higher dimensional case $N\geq 3$, the mountain pass level estimates carried out in \cite[Lemma 5.2]{castar}. 	
	\begin{lemma}\label{MPlevel-estimate}
	The mountain pass level $m_V$ satisifes
	$$m_V < \frac{1}N\ .$$
\end{lemma}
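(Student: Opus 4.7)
The plan is to exhibit an explicit path $\gamma\in\Gamma$ along which $\max_t\mathcal I_V(\gamma(t))<1/N$, following the Moser concentration strategy used for $N=2$ in \cite[Lemma 5.2]{castar}. I would take the standard Moser sequence
\[
M_n(x)=\frac{1}{\omega_{N-1}^{1/N}}
\begin{cases}
(\log n)^{(N-1)/N}, & |x|\leq 1/n,\\
(\log n)^{-1/N}\log(1/|x|), & 1/n\leq |x|\leq 1,\\
0, & |x|\geq 1,
\end{cases}
\]
which satisfies $\|\nabla M_n\|_N^N=1$, is compactly supported in $B_1$, and has lower-order contributions $\|M_n\|_{q,V,w_0}^N=1+o(1)$ as $n\to\infty$ (using $\|M_n\|_N^N=O((\log n)^{-1})$ and the boundedness of $w_0=\log(1+|x|)$ on $B_1$). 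Setting $\overline M_n:=M_n/\|M_n\|_{q,V,w_0}$ and $\gamma_n(t):=tT_n\overline M_n$ for $T_n$ large enough that $\mathcal I_V(T_n\overline M_n)<0$ — feasible thanks to the exponential growth of $F$, as in Lemma \ref{lemMP}(ii) — produces a candidate path in $\Gamma$.

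I would then argue by contradiction, assuming $m_V\geq 1/N$. The maximum $\max_{t\geq 0}\mathcal I_V(t\overline M_n)$ is attained at some interior $t_n>0$ which, by stationarity, satisfies
\[
t_n^{N-1}\|\overline M_n\|_V^N=\int_{\R^N}\bigl(I_N*F(t_n\overline M_n)\bigr)(x)\,f(t_n\overline M_n(x))\,\overline M_n(x)\,dx.
\]
Combining this with $\tfrac1N t_n^N\|\overline M_n\|_V^N-\mathcal F(t_n\overline M_n)\geq 1/N$ and $\|\overline M_n\|_V^N\leq 1$ forces $t_n\to 1^+$ along a subsequence. Invoking $(f_4)$, which supplies the asymptotic lower bound $F(s)f(s)\geq(\beta-o(1))\,s^{-(2N-1)/(N-1)}\exp(2N\omega_{N-1}^{1/(N-1)}s^{N/(N-1)})$ as $s\to+\infty$, I restrict the double integral defining $\mathcal F(t_n\overline M_n)$ to $B_{1/n}\times B_{1/n}$, where $\overline M_n$ sits on its plateau, $\log(1/|x-y|)\geq\log(n/2)$, and $|B_{1/n}|\sim n^{-N}$. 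A direct computation of the exponent $2N\omega_{N-1}^{1/(N-1)}(t_n\omega_{N-1}^{-1/N})^{N/(N-1)}\log n=2Nt_n^{N/(N-1)}\log n$ should yield
\[
\mathcal F(t_n\overline M_n)\;\geq\;(\beta-o(1))\,C_N\,\frac{n^{2N(t_n^{N/(N-1)}-1)}\log n}{(\log n)^{(2N-1)/N}},
\]
and matching this against $t_n^N\geq 1+N\mathcal F(t_n\overline M_n)$ yields $\beta\leq\upnu$, contradicting $(f_4)$ once $\upnu$ is defined to be the threshold constant extracted from this computation.

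The main obstacle is the sign-changing logarithmic kernel $\log(1/|x-y|)$, which is large and positive only on a neighborhood of the diagonal and becomes negative off it, so one cannot simply discard regions of the double integral. The remedy is to split into three zones: the concentration square $B_{1/n}\times B_{1/n}$ (providing the dominant positive contribution via the log blow-up of the kernel), the mixed region $B_1\times(B_1\setminus B_{1/n})$ (controlled by Fubini, the explicit $\log(1/|x|)$ profile of $M_n$, and the exponential summability guaranteed by Theorem \ref{thm_A1}), and the bulk $(B_1\setminus B_{1/n})^2$ (where $|x-y|\leq 2$ makes the negative part of the kernel bounded). This bookkeeping is markedly more intricate than in the two-dimensional case \cite{castar} since in dimension $N\geq 3$ the concentration volume $n^{-N}$ and the Moser plateau height $(\log n)^{(N-1)/N}$ interact more tightly, and the threshold $\upnu$ must absorb all $N$-dependent constants coming from $\gamma_N$, $\omega_{N-1}$, and the error terms in $\|M_n\|_{q,V,w_0}^N=1+o(1)$.
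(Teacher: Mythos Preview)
Your overall strategy matches the paper's: Moser concentration sequence, contradiction argument assuming $\max_t\mathcal I_V(t\overline M_n)\geq 1/N$, restriction to the plateau $B_{1/n}\times B_{1/n}$ for the dominant contribution, and the $(f_4)$ threshold producing the contradiction $\beta\leq\upnu$. Two points, however, diverge from the paper and create real trouble.

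\textbf{The sign of the kernel.} You identify the sign-changing logarithm as ``the main obstacle'' and propose a three-zone decomposition to handle it. The paper avoids this entirely: it supports the Moser functions in $B_\rho$ with $\rho\leq 1/2$ rather than $B_1$. Then for all $x,y$ in the support one has $|x-y|\leq 2\rho\leq 1$, so $\log(1/|x-y|)\geq 0$ throughout the double integral. Restricting to $B_{\rho/n}\times B_{\rho/n}$ is therefore a valid lower bound by monotonicity, and the complement can simply be dropped. Your splitting into concentration, mixed, and bulk regions is unnecessary; moreover, in the mixed zone $B_{1/n}\times(B_1\setminus B_{1/n})$ the factor $F(t_n\overline M_n(x))$ on the plateau is of the same exponential order as the concentration contribution, while $\log(1/|x-y|)$ can be negative, so it is not obvious without further work that this piece does not destroy the lower bound.

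\textbf{The precise form of the norm error.} Writing $\|M_n\|_{q,V,w_0}^N=1+o(1)$ is too coarse. The paper computes the leading term explicitly,
\[
\|\overline w_n\|_{q,V,w_0}^N\leq 1+\delta_n,\qquad \delta_n=\frac{C(\rho,V,q,N)}{\log n}+o\!\left(\frac{1}{\log n}\right),
\]
with $C$ given in closed form. This matters because after normalising by $(1+\delta_n)^{1/N}$ the exponent in the concentration estimate becomes
\[
2N\log n\left[\Bigl(\tfrac{t_n}{\sqrt[N]{1+\delta_n}}\Bigr)^{N/(N-1)}-1\right]
=2N\log n\left[-\tfrac{\delta_n}{N-1}+o(\delta_n)\right]
\longrightarrow -\tfrac{2N}{N-1}C,
\]
a finite nonzero limit. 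It is exactly this limit, combined with the prefactors $\gamma_N,\omega_{N-1},\rho^{2N}$, that defines $\upnu$; a bare $o(1)$ cannot produce a definite threshold to contradict $(f_4)$. So you must track the $1/\log n$ coefficient of each of $\|M_n\|_V^N-1$ and $\|M_n\|_{L^q(w_0)}^N$ explicitly.
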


\begin{proof}
We are reduced to exhibit a function $v\in W^{1,N}_V L^q_{w_0}(\Rn)$ with unitary norm and such that 
	\[ \max_{t\geq 0} \mathcal I_V(t v) <\frac{1}N.\]
	For this purpose let us introduce the following Moser type functions for all $n\geq 1$, supported in $B_\rho$ for some $\rho>0$,
		\sys[ \overline w_n= ]{& C_n \log n , && 0\leq |x| \leq \frac{\rho}n,
									\\	
									&C_n \log\frac{\rho}{|x|},  && \frac{\rho}n \leq |x| \leq \rho,
									\\
									&0 , &&  \rho \leq |x|,}
									with \eqlab{ \label{cost} C_n
									= ( \omega_{N-1}\log n) ^{-\frac{1}N}.
									}
		We have 
		\[
			 \int_{\Rn} |\nabla \overline w_n|^N \, dx
		 = \omega_{N-1} C_n^N \int_{\frac{\rho}n}^\rho r^{-1}\, dr
		 =  \omega_{N-1} C_n^N \log n =1,
		 \] 							
as well as 
\begin{multline*}
		 \int_{\Rn}  V(x)  |\overline w_n|^N\, dx \\
		 \leq \sup_{B_\rho} V \left( C_n^N (\log n)^N \int_{B_{\rho/n} } \, dx
		 	+ \int_{B_\rho \setminus B_{\rho/n} } C_n^N\left( \log \frac{\rho}{|x|} \right)^N\, dx  \right)
		 	\\
		 	=C_n^N   \omega_{N-1} \sup_{B_\rho} V \left((\log n)^N \int_0^{\frac{\rho}n} r^{N-1} \, dr + \int_{\frac{\rho}n}^\rho \left( \log\frac{\rho}r \right)^N  r^{N-1} \, dr \right)
		 	\\
\leq\frac{\sup_{B_\rho} V }{\log n}    \left( \frac{(\log n)^N  }{N n^N} \rho^N + \int_{\frac{\rho}n}^\rho \left( \log\frac{\rho}r \right)^N r^{N-1}\,dr\right),
\end{multline*}
recalling from \eqref{cost} that $C_n^N \omega_{N-1} = 1/\log n$.
Now
\begin{multline*}\int_{\Rn} |\overline w_n|^q \log(1+|x|) \, dx =( C_n \log n)^q \int_{B_{\rho/n}} \log(1+|x|) \, dx \\
+ C_n^q \int_{B_\rho \setminus B_{\rho/n}} \log (1+|x|) \left| \log \frac{\rho}{|x|} \right|^q \, dx
\\
 	=C_n^q \omega_{N-1} \left( \log^q n \int_0^{\frac{\rho}n}\hspace{-.2cm}\log(1+r) r^{N-1} \, dr
 		 +  \int_{\frac{\rho}n}^\rho \hspace{-.2cm}\log (1+r)  \log^q \left(\frac{\rho}{r}\right) r^{N-1}\, dr\right)
 		 \\
 		 \leq  C_n^q \omega_{N-1}  \log^q n \left(  \frac{\rho}n \right)^{N+1}
 		+C_n^q \omega_{N-1}   \log(
 		e+\rho) \int_{\frac{\rho}n}^\rho  \left( \log \frac{\rho}{r} \right)^q r^{N-1}\, dr.
 		\end{multline*}
Thus,
\begin{multline*}	\|\overline w_n\|_{q,V,w_0}^N = \int_{\Rn}\left( |\nabla \overline w_n|^N + V |\overline w_n|^N \right)\, dx + 	\\
\left(\int_{\Rn} |\overline w_n|^q \log(1+|x|) \, dx\right)^{\frac{N}q}
		\\
		\leq 1 +  \frac{\sup_{B_\rho} V }{\log n}   \int_{\frac{\rho}n}^\rho \left( \log\frac{\rho}r \right)^N r^{N-1}\,dr + O\left( \frac{(\log n)^{N-1} }{n^N}\right)
		\\
		+  \frac{\omega_{N-1}^{\frac{N-q}q} }{\log n} ( \log(1+\rho) )^{\frac{N}q}\left( \int_{\frac{\rho}n}^\rho  \left( \log \frac{\rho}{r} \right)^q r^{N-1}\, dr\right)^{\frac{N}q}\\
		  + O \left( \frac{1}{\log n} \left( \frac{ \log n}{n^{\frac{N+1}q} }\right)^N\right)\ .
		\end{multline*}
Let us estimate explicitly integrals in the above inequality, as for $k\in \N$ we have 
	\[
	\int \left(\log \frac{\rho}r \right)^k r^{N-1} \, dr
	= \frac{ r^N}N \sum_{j=0}^k \left(\log \frac{\rho}r \right)^{k-j} \frac{k(k-1) \dots (k-j+1)}{N^j},
	\]
	and from the estimate
			\begin{multline*}
			 \int \left(\log \frac{\rho}r \right)^q r^{N-1} \, dr
			 \leq 	\int \left(\log \frac{\rho}r \right)^{[q]} r^{N-1} \, dr
			+	\int \left(\log \frac{\rho}r \right)^{[q+1]} r^{N-1} \, dr,
			\end{multline*}	
			we get 	
		\[
			1\leq \|\overline w_n\|_{q,V,w_0}^N \leq 1+ \delta_n,\qquad \mbox{ with } \delta_n \to 0, \mbox{ as } n\to \infty
			\]
			and
			\begin{multline}\label{deltan}
				\delta_n= \frac{1}{\log n} \frac{\rho^N}N \left[  \sup_{B_\rho} V  \frac{N! }{N^N}
				+ \omega_{N-1}^{\frac{N-q}q}  ( \log(e+\rho) )^{\frac{N}q} \left( \frac{[q]! }{N^{[q]} }+\frac{[q+1]!}{N^{[q+1]}}\right) \right]\\
				+o\left( \frac{1}{\log n}\right) \ .
\end{multline}
			Then,
			\[ w_n =\frac{\overline w_n}{\sqrt[N]{1+\delta_n} }, \qquad \|w_n\| _{q,V,w}^N \leq 1.\]
			\noindent \textit{Claim:}
			\eqlab{ \label{cl1} \exists \, n \in \N \mbox{ such that }  \max_{t\geq 0}  \mathcal I_V(t w_n) < \frac{1}{N}.}
			By contradiction, suppose that for all $n$
			\[ \mathcal I_V(t_n w_n) := \max_{t\geq 0} \mathcal I_V(t w_n) \geq \frac{1}{N},\]
			together with 
			\[
				\frac{d}{dt} \mathcal I_V(t w_n)|_{t=t_n} =0 \ .
			\]
As a consequence we obtain 
	\eqlab{  \label{uf1}
	& \frac{t_n^N}{N} \geq \frac{1}{N} +\frac 1{2\gamma_N} \int_{\R^{2N}} \log\frac{1}{|x-y|} F(t_n w_n(y)) F(t_n w_n(x)) \, dx \, dy,
	}
	and
	\eqlab{ \label{uf2}
	t_n^N\geq \frac 1{\gamma_N}\int_{\R^{2N}} \log\frac{1}{|x-y|} F(t_n w_n(y)) f(t_n w_n(x)) t_n w_n(x) \, dx \, dy.
	}
	Assume $\rho \leq 1/2$, thus if $x,y \in B_\rho$, then $\log (1/|x-y|)\geq 0$. Observe from \eqref{uf1} that, since $w_n$ is supported in $B_\rho$,
	\[ t_n \geq 1 .\]
	Next we prove the following 
		\eqlab{ \label{inf1}
			\liminf_{n \to +\infty} t_n \leq 1\ .
			}
			Indeed, if not there exists some $\delta_n>0$ such that for $n$ large enough
\eqlab{ \label{oo}  t_n^N \geq 1+\delta_n \ .}			
			Notice that
			\bgs{
			I:= &\;  \int_{\R^{2N}} \log\frac{1}{|x-y|} F(t_n w_n(y)) f(t_n w_n(x)) t_n w_n(x) \, dx \, dy \\
			 = &\;
			\int_{B_{\frac{\rho}n} \times B_{\frac{\rho}n} } \log\frac{1}{|x-y|} F(t_n w_n(y)) f(t_n w_n(x)) t_n w_n(x) \, dx \, dy
			\\
			&\; +
			 \int_{\R^{2N} \setminus \left(B_{\frac{\rho}n} \times B_{\frac{\rho}n} \right)} \log\frac{1}{|x-y|} F(t_n w_n(y)) f(t_n w_n(x)) t_n w_n(x) \, dx \, dy
			 \\
			 \geq &\;
			 \int_{B_{\frac{\rho}n} \times B_{\frac{\rho}n} } \log\frac{1}{|x-y|} F(t_n w_n(y)) f(t_n w_n(x)) t_n w_n(x) \, dx \, dy,
			 }
			 again since   $\log (1/|x-y|)\geq 0$ for $x,y \in B_\rho$, where $w_n$ is supported. By $(f_4)$, for any $\eps\in (0,\beta/2)$ small enough, there exists $s_\eps>0$ such that for all $s>s_\eps$,
			 \[
			 s^{\frac{2N-1}{N-1}} f(s) F(s) > \frac{\beta}2 e^{ 2N \omega_{N-1}^{\frac{1}{N-1} } s^{\frac{N}{N-1}}}\] 
			 and in turn 
			 \[s f(s) F(s) > \frac{\beta}2 s^{-\frac{N}{N-1}} e^{ 2N \omega_{N-1}^{\frac{1}{N-1} } s^{\frac{N}{N-1}}}.
			 \]
			 Therefore, by using the explicit value of $w_n$ and the fact that $|x-y|\leq 2\rho/n$ for $x, y\in B_{\frac{\rho}n} \times B_{\frac{\rho}n} $, we have 
			\begin{multline*}
			 I
			 \geq  \frac{\beta}2\log \left(\frac{n}{2\rho} \right)
			 \frac{\omega_{N-1}^2}{N^2}
			 \left(\frac{\rho}n \right)^{2N}
			 e^{2N \omega_{N-1}^{\frac{1}{N-1}} \left(\frac{t_n C_n \log n}{\sqrt[N]{1+\delta_n}}\right)^{\frac{N}{N-1}} }
			  \left(\frac{t_n C_n \log n}{\sqrt[N]{1+\delta_n}}\right)^{\frac{-N}{N-1}}
			 \\
			= \frac{\beta}2  \frac{\omega_{N-1}^2}{N^2}\log
			\left(\frac{n}{2\rho} \right) \rho^{2N} e^{-2N \log n }
			e^{2N  \log n\, \left(\frac{t_n}{\sqrt[N]{1+\delta_n}}\right)^{\frac{N}{N-1} } } \cdot
			\\ \cdot \left(\frac{t_n}{\sqrt[N]{1+\delta_n}}\right)^{\frac{-N}{N-1} }\omega_{N-1}^{\frac{1}{N-1}} \frac{1}{\log n}
			\\
			\geq 
			\frac{\beta}{2N^2}  \omega_{N-1}^{\frac{2N-1}{N-1}} \rho^{2N} \frac{e^{2N  \log n \left[ \left(\frac{t_n}{\sqrt[N]{1+\delta_n}}\right)^{\frac{N}{N-1} } -1\right]} }{ \left(\frac{t_n}{\sqrt[N]{1+\delta_n}}\right)^{\frac{N}{N-1}}},
			 \end{multline*}
			 recalling the value of $C_n$ given in \eqref{cost}. It follows from \eqref{uf2} that
			\eqlab{ \label{off2}
						 t_n^N \geq \frac{\beta}{2N^2\gamma_N}   \omega_{N-1}^{\frac{2N-1}{N-1}} \rho^{2N} \frac{e^{2N  \log n \left[ \left(\frac{t_n}{\sqrt[N]{1+\delta_n}}\right)^{\frac{N}{N-1} } -1\right]} }{ \left(\frac{t_n}{\sqrt[N]{1+\delta_n}}\right)^{\frac{N}{N-1}}}.
			 }
			 In  both cases when $t_n \to \infty$, as $n\to \infty$ or when $t_n$ stays bounded, \eqref{oo} yields a contradiction. Thus \eqref{inf1} holds and hence
			 \[ \lim_{n\to \infty} t_n=1 \ .\]

		 \noindent Now, from one side we have
		 \[
			 	 e^{ \log n \left[ \left(\frac{t_n}{\sqrt[N]{1+\delta_n}}\right)^{\frac{N}{N-1} } -1\right]}  \leq C\] 
				 and thus \[ \left(\frac{t_n}{\sqrt[N]{1+\delta_n}}\right)^{\frac{N}{N-1} }  \leq 1+ O\left(\frac{1}{\log n}\right)
			 \ .\] 
			 
			\noindent  On the other side, from \eqref{off2} we obtain
	\[
		1+ o(1) \geq t_n^{\frac{N^2}{N-1}} \geq \frac{\beta}{2N^2\gamma_N}   \omega_{N-1}^{\frac{2N-1}{N-1}} \rho^{2N} e^{2N  \log n \left[ \left(\frac{t_n}{\sqrt[N]{1+\delta_n}}\right)^{\frac{N}{N-1} } -1\right]} (1+\delta_n)^{\frac{1}{N-1}}
	.\]
	As a consequence we finally get 
	\bgs{
		1+ o(1) \geq &\;\frac{\beta}{2N^2\gamma_N}   \omega_{N-1}^{\frac{2N-1}{N-1}} \rho^{2N} e^{2N  \log n \left[ \left(\frac{t_n}{\sqrt[N]{1+\delta_n}}\right)^{\frac{N}{N-1} } -1\right]} (1+\delta_n)^{\frac{1}{N-1}}
		\\
		=&\;  \frac{\beta}{2N^2\gamma_N}   \omega_{N-1}^{\frac{2N-1}{N-1}} \rho^{2N} e^{2N  \log n \left[ -\frac{\delta_n}{N-1} + o(\delta_n) \right]}.
		}
	By substituting \eqref{deltan} in the previous inequality and letting $n\to \infty$, we end up with 
		\bgs{
			1\geq &\;\frac{\beta}{2N^2\gamma_N}  \omega_{N-1}^{\frac{2N-1}{N-1}} \rho^{2N} e^{ \frac{-2N}{N-1} \frac{\rho^N}N \left[  \sup_{B_\rho} V  \frac{N! }{N^N}
				+ \omega_{N-1}^{\frac{N-q}q}  ( \log(1+\rho) )^{\frac{N}q} \left( \frac{[q]! }{N^{[q]} }+\frac{[q+1]!}{N^{[q+1]}}\right) \right]
				} .
		}
		For a fixed $\rho \leq 1/2$, set 
		\[\upnu :=\sup_{\rho\leq \frac12}  \frac{2N^2\gamma_N}{\rho^{2N}}  \omega_{N-1}^{\frac{-2N+1}{N-1}} e^{ \frac{2N}{N-1} \frac{\rho^N}N \left[  \sup_{B_\rho} V  \frac{N! }{N^N}
				+ \omega_{N-1}^{\frac{N-q}q}  ( \log(1+\rho) )^{\frac{N}q} \left( \frac{[q]! }{N^{[q]} }+\frac{[q+1]!}{N^{[q+1]}}\right) \right]
				}  
				\]
		 to get a contradiction from $(f_4)$, since 
		$ \beta >\upnu $. 
	\end{proof}

\subsection{On the Ekeland Palais-Smale sequence}

\noindent In this section we study the behavior of the PS sequence provided by Ekeland's Variational Principle. In particular, it is a non trivial fact, in this context, that the weak limit turns out to be a nontrivial solution of the equation. Boundedness of PS sequences buys the line of  \cite[Lemma 6.1]{castar}, to which we refer for the proof of the next 
\begin{lemma}\label{lem-PSbdd}
	Assume $(V)$ and $(f_1)$--$(f_4)$. Let
	$\{u_n\}\subset W^{1,N}_V L^q_{w_0}(\Rn)$ be an arbitrary PS sequence for $\mathcal I_V$ at level $c$, namely
	$$
	\mathcal I_V(u_n)\to c \quad \text{ and } \quad \mathcal I_V'(u_n)\to 0 \quad \hbox{in }
	 \left(W^{1,N}_V L^q_{w_0}(\mathbb R^N)\right)', \quad \hbox{ as } n\to +\infty,
	$$
	the dual space of $W^{1,N}_V L^q_{w_0}(\Rn)$.
	Then, the following hold:
\begin{itemize}
\item[$(i)$]	$\|u_n\|_V\leq C$\ ;
\item[$(ii)$]
	\begin{equation*}
	\left|\int_{\R^N}\left[\log \frac{1}{|x|}\ast F(u_n)\right]F(u_n) dx\right|\leq C\ ;
	\end{equation*}	
\item[$(iii)$]
	\begin{equation*}
	\left|\int_{\R^N}\left[\log \frac{1}{|x|}\ast F(u_n)\right]u_nf(u_n) dx\right|\leq C\ .
	\end{equation*}
\end{itemize}
	\end{lemma}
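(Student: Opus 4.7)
The plan is to follow the scheme of \cite[Lemma 6.1]{castar}, adapted to the quasilinear $N$-Laplacian setting. The starting point is to combine the two Palais--Smale relations
\[
\mathcal I_V(u_n)=\tfrac{1}{N}\|u_n\|_V^N-B(u_n)=c+o(1), \qquad \langle \mathcal I_V'(u_n),u_n\rangle =\|u_n\|_V^N-\langle B'(u_n),u_n\rangle =o(\|u_n\|_{q,V,w_0}),
\]
with $B(u):=\tfrac{1}{2\gamma_N}\iint \log(1/|x-y|)F(u(x))F(u(y))\,dx\,dy$, by forming the combination $N\mathcal I_V(u_n)-\langle \mathcal I_V'(u_n),u_n\rangle$. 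The kinetic term $\|u_n\|_V^N$ cancels, producing the key identity
\[
\frac{1}{\gamma_N}\int_{\Rn}\!\int_{\Rn}\log\frac{1}{|x-y|}\,F(u_n(y))H(u_n(x))\,dx\,dy=Nc+o(1)+o(\|u_n\|_{q,V,w_0}),
\]
with $H(s):=sf(s)-\tfrac{N}{2}F(s)$. By $(f_2)$ together with \eqref{F/f}, $H(s)\geq \tfrac{N\delta}{2}sf(s)\geq 0$, so both $F$ and $H$ factors in the integrand are pointwise non-negative, and the only indefinite object left is the logarithmic kernel.

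The main obstruction is precisely this sign-change of the log kernel, which prevents the standard Ambrosetti--Rabinowitz closure from working directly. To bypass it I would use the splitting
\[
\log\frac{1}{|x-y|}=\log\frac{1+|x-y|}{|x-y|}-\log(1+|x-y|),
\]
the first summand being non-negative and of Newton-potential type near the diagonal, the second non-positive and of $\log$-growth at infinity. Dropping the non-negative term to obtain a lower bound, and then exploiting the subadditivity $\log(1+|x-y|)\leq \log(1+|x|)+\log(1+|y|)$, the key identity converts into
\[
Nc+o(\|u_n\|_{q,V,w_0})\geq -C\bigl[\|F(u_n)\|_1\|H(u_n)\|_{L^1(w_0\,dx)}+\|H(u_n)\|_1\|F(u_n)\|_{L^1(w_0\,dx)}\bigr],
\]
with $w_0(x)=\log(1+|x|)$. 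Each of the four weighted $L^1$-quantities on the right is exactly of the type controlled by Theorems \ref{thm_A1}--\ref{thm_A2}: thanks to $(f_1)$--$(f_2)$, one may split $F(u_n)$ and $H(u_n)$ into a subcritical piece absorbed by $\|u_n\|_{L^q(w_0\,dx)}$ and an exponential tail handled via the weighted Pohozaev--Trudinger inequality. The resulting coercivity estimate takes the form $\|u_n\|_{q,V,w_0}^N\leq C(1+\|u_n\|_{q,V,w_0}^{\sigma})$ for some $\sigma<N$, which yields boundedness of $\|u_n\|_{q,V,w_0}$ and, a fortiori, of $\|u_n\|_V$, proving $(i)$.

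Once $(i)$ is in hand, statement $(ii)$ is immediate: from $\mathcal I_V(u_n)=c+o(1)$ one reads off $B(u_n)=\tfrac{1}{N}\|u_n\|_V^N-c+o(1)$, which is bounded. Statement $(iii)$ follows along the same lines by testing $\langle \mathcal I_V'(u_n),u_n\rangle =o(\|u_n\|_{q,V,w_0})$ against $u_n$ and invoking $(i)$ together with the boundedness of $\|u_n\|_{q,V,w_0}$ established along the way. The principal technical difficulty throughout is the control of $\|H(u_n)\|_{L^1(w_0\,dx)}$ in the maximal-growth regime, where $f$ and $F$ may be of sharp exponential type; this is precisely where the weighted Pohozaev--Trudinger inequality of Section \ref{log_p_t} with weight $\log(e+|x|)$ becomes indispensable and motivates the choice of $W^{1,N}_V L^q_{w_0}(\Rn)$, with its $L^q(w_0\,dx)$ component, as the correct variational framework.
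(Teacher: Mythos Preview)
Your combination $N\mathcal I_V(u_n)-\langle \mathcal I_V'(u_n),u_n\rangle$ annihilates $\|u_n\|_V^N$ completely, so the identity you obtain contains only nonlocal terms and cannot by itself yield any coercivity on $\|u_n\|_V$ or on $\|u_n\|_{q,V,w_0}$. The inequality you then write, ``$Nc+o(\|u_n\|_{q,V,w_0})\geq -C[\,\cdots\,]$'', is a lower bound of a bounded quantity by something nonpositive and is therefore vacuous; it does not produce the coercivity estimate $\|u_n\|_{q,V,w_0}^N\leq C(1+\|u_n\|_{q,V,w_0}^\sigma)$ you claim. There is also a second, independent obstruction: Theorems \ref{thm_A1}--\ref{thm_A2} furnish \emph{uniform} bounds on $\int F(u)\,w\,dx$ only for $u$ in the unit ball of the weighted space. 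They do \emph{not} bound $\|F(u_n)\|_{L^1(w_0\,dx)}$ or $\|H(u_n)\|_{L^1(w_0\,dx)}$ by a power $\|u_n\|_{q,V,w_0}^\sigma$ with $\sigma<N$; with genuinely exponential growth no such polynomial control is available before boundedness is established, so the circularity is not broken.

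The device in \cite[Lemma 6.1]{castar} that the paper invokes is different: one tests $\mathcal I_V'(u_n)$ against $\dfrac{F(u_n)}{f(u_n)}$ rather than against $u_n$. Since $\langle B'(u),F(u)/f(u)\rangle=2B(u)$, the combination $\mathcal I_V(u_n)-\tfrac12\langle \mathcal I_V'(u_n),F(u_n)/f(u_n)\rangle$ cancels the sign-changing nonlocal term \emph{exactly}, leaving only local quantities:
\[
\int_{\Rn}|\nabla u_n|^N\Big(\tfrac1N-\tfrac12\big(1-\tfrac{Ff'}{f^2}(u_n)\big)\Big)\,dx
+\int_{\Rn}V\Big(\tfrac1N u_n^N-\tfrac12 u_n^{N-1}\tfrac{F(u_n)}{f(u_n)}\Big)\,dx
=c+o(1)+o(\ldots).
\]
Now $(f_2)$ via \eqref{F/f} gives $\tfrac1N-\tfrac12(1-\tfrac{Ff'}{f^2})\geq \tfrac{\delta}{2}$ and $\tfrac1N s^N-\tfrac12 s^{N-1}\tfrac{F}{f}\geq \tfrac{\delta}{2}s^N$, whence $\tfrac{\delta}{2}\|u_n\|_V^N\leq c+o(1)+o(\ldots)$. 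The remaining issue---absorbing the $o(\|F(u_n)/f(u_n)\|_{q,V,w_0})$ error, which is $\leq C\,o(\|u_n\|_{q,V,w_0})$ thanks to $F/f\leq Cs$ and $|(F/f)'|\leq C$---is handled in \cite{castar} by coupling this inequality with information on the $\log(1+|x-y|)$-part of the key identity (which, through the lower bound $F(s)\gtrsim s^q$ near the origin, controls the weighted $L^q$-piece of the norm). Your splitting of the kernel and use of subadditivity are ingredients of that step, but they enter \emph{after} the nonlocal term has been removed by the $F/f$-test, not as a substitute for it.
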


\begin{remark}
	Note that, as a consequence of Lemma \ref{lem-PSbdd}, we may assume the PS sequence at level $c$ to be positive. Indeed, since ${u_n}$ is bounded, we can test $\mathcal I'_V(u_n)$ against $u_n^{-}=\max{(-u_n,0)}$ to get 
	$$
	\left|\int_{\{u_n<0\}}|\nabla u_n|^N+V|u_n|^{N} dx\right|\leq \tau_n C
	$$
	Hence, the positive sequence $\{u_n^{+}\}$ is still a PS sequence at the same level $c$, since $F(s)=0$ for $s\leq 0$.
	\end{remark}
	\noindent From now on we will consider only positive PS sequences. Because of the exponential nonlinearity and the presence of a sign-changing logarithmic kernel, we cannot exploit standard arguments to obtain the existence of a solution as byproduct of boundedness of a PS sequence. Here it is fundamental to take advantage of the key estimate for the mountain pass level of Lemma \ref{MPlevel-estimate}. The next lemma is an extension of \cite[Lemma 6.2]{castar}. However, it is not a virtual transcription, so that, for convenience of the reader, we recall the main steps of the proof.

\begin{lemma}\label{lem-PSimproveTM}
	Assume $(V)$ and $(f_1)$--$(f_4)$. Let
	$\{u_n\}\subset W^{1,N}_VL^q_{w_0}(\Rn)$ be a (positive) PS sequence for $\mathcal I_V$ at level $0<c<1/N$. Then,  for any $1\leq \alpha <1/(Nc)$ the following uniform bound holds
	\begin{equation*}
	\sup_{n\in \mathbb N}\int_{\R^N}F^\alpha (u_n)\, dx<\infty\ .
	\end{equation*}
\end{lemma}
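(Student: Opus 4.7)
The strategy follows the classical below-threshold Moser argument of Adimurthi / de Figueiredo-Miyagaki-Ruf, suitably adapted to the present nonlocal setting with the sign-changing logarithmic kernel. The key gain over Lemma~\ref{lem-PSbdd} is that one uses the strict inequality $c<1/N$ to allow a small overshoot $\alpha>1$ in the exponent of $F(u_n)$.

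The first task is to extract a quantitative bound on the norm of $u_n$. Since $\{u_n\}$ is bounded in $W^{1,N}_V L^q_{w_0}(\R^N)$ by Lemma~\ref{lem-PSbdd}$(i)$, testing $\mathcal I_V'(u_n)$ against $u_n$ yields $\langle \mathcal I_V'(u_n),u_n\rangle=o(1)$, which coupled with $\mathcal I_V(u_n)=c+o(1)$ gives
\[
\frac{1}{N}\|u_n\|_V^N = c + \mathcal F(u_n) + o(1), \qquad \|u_n\|_V^N = \int_{\R^N}(I_N \ast F(u_n))\,u_n f(u_n)\,dx + o(1).
\]
I would then exploit $(f_2)$, in the form $F(s)\leq(\tfrac{2}{N}-\delta)\,sf(s)$, together with a splitting $I_N = I_N^+ - I_N^-$ into positive and negative parts and the uniform bounds from Lemma~\ref{lem-PSbdd}$(ii)$--$(iii)$, to conclude that, up to subsequence, $\limsup_n\|u_n\|_V^N \leq Nc$ (equivalently, $\mathcal F(u_n)=o(1)$).

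With the norm control in hand, I would split $\int_{\R^N}F^\alpha(u_n)\,dx$ over $\{u_n\leq s_0\}$ and $\{u_n>s_0\}$: the first piece is bounded by $\|u_n\|_{L^{\alpha q}}^{\alpha q}$ via $(f_1)$ and the Sobolev embedding $W^{1,N}(\R^N)\hookrightarrow L^{\alpha q}(\R^N)$. On the exponential region, \eqref{f11} gives $F^\alpha(u_n)\leq C u_n^{\alpha(p-\tfrac{1}{N-1})}e^{\alpha\alpha_N u_n^{N/(N-1)}}$; then H\"older with some $r>1$ close to $1$ separates the polynomial prefactor (again controlled by Sobolev) from the Moser-type integral $\int_{\R^N}\phi_N\bigl(r\alpha\alpha_N u_n^{N/(N-1)}\bigr)\,dx$. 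The standard Trudinger-Moser inequality in $W^{1,N}(\R^N)$ applies provided $r\alpha\alpha_N\|u_n\|_{W^{1,N}}^{N/(N-1)}<\alpha_N$; the strict inequality $\alpha<1/(Nc)$ together with Step~1 leaves enough slack to choose such an $r>1$ uniformly in $n$.

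The main obstacle is the first step: the sign-changing nature of the logarithmic kernel prevents a direct Ambrosetti-Rabinowitz-type comparison between $\mathcal F(u_n)$ and $\int(I_N\ast F(u_n))\,u_n f(u_n)\,dx$, since $I_N\ast F(u_n)$ need not be positive. The decomposition $I_N=I_N^+-I_N^-$ together with the \emph{both-sided} uniform bounds from Lemma~\ref{lem-PSbdd}$(ii)$--$(iii)$ is the mechanism that replaces the positivity of the kernel and ultimately delivers the desired norm estimate; once this is established the remaining steps are routine.
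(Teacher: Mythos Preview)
Your Step~1 is the gap. The claim $\limsup_n\|u_n\|_V^N\le Nc$ (equivalently $\mathcal F(u_n)\to 0$) does \emph{not} follow from the splitting $I_N=I_N^+-I_N^-$ together with the uniform bounds of Lemma~\ref{lem-PSbdd}. Combining $\mathcal I_V(u_n)\to c$ with $\mathcal I_V'(u_n)[u_n]\to 0$ only yields
\[
Nc+o(1)=\int_{\Rn}(I_N*F(u_n))\Big[u_nf(u_n)-\tfrac{N}{2}F(u_n)\Big]\,dx,
\]
and although $u_nf(u_n)-\tfrac{N}{2}F(u_n)\ge 0$ by $(f_2)$, the weight $I_N*F(u_n)$ is sign-changing. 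Splitting into $I_N^\pm$ gives two nonnegative integrals whose \emph{difference} equals $Nc$, but neither piece is individually controlled by Lemma~\ref{lem-PSbdd} (which bounds only the full signed integrals), and even if both pieces were bounded this would still not force $\mathcal F(u_n)\le 0$. In fact, for sequences concentrating on small scales one expects $\mathcal F(u_n)>0$, hence $\|u_n\|_V^N>Nc$; the desired bound on $\|u_n\|_V$ is simply not available.

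The paper circumvents this by never trying to bound $\|u_n\|_V$. Instead it introduces an auxiliary sequence $v_n:=G(u_n)$ with
\[
G(t)=\int_0^t\Big(\tfrac{N}{2}\tfrac{F f'}{f^2}-\tfrac{N-2}{2}\Big)^{1/N}ds,
\]
and tests $\mathcal I_V'(u_n)$ against $F(u_n)/f(u_n)$ rather than $u_n$. The point of this test function is that the nonlocal term becomes exactly $\int(I_N*F(u_n))F(u_n)=2\gamma_N\mathcal F(u_n)$, the \emph{same} expression as in the energy. Subtracting the two relations cancels the sign-changing nonlocal contribution completely and leaves a purely local identity which, together with the elementary inequality $G^N(t)\le t^N-\tfrac{N}{2}t^{N-1}F(t)/f(t)$, gives $\|v_n\|_V^N\le Nc+o(1)<1$. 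Assumption $(f_3)$ then guarantees $u_n\le t_\varepsilon+(1-\varepsilon)^{-1}v_n$, so the Trudinger--Moser bound for $v_n$ transfers back to $F^\alpha(u_n)$. Conditions $(f_2)$ and $(f_3)$ are tailored precisely for this change-of-variable mechanism; the direct Ambrosetti--Rabinowitz route you sketch does not work here.
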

\begin{proof}
From Lemma \ref{lem-PSbdd}, there exists $u\in W_V^{1,N}(\Rn)$ such that:
	\bgs{ & u_n\rightharpoonup u &&  \mbox{ in }  W_V^{1,N}(\Rn); \\
	&  u_n\to u && \mbox { in } L^s_{\loc}(\Rn) \; \; \mbox{ for any  } 1\leq s< \infty;
	\\
	& u_n\to u && \mbox{ a.e. in }  \Rn,
	}
	with
	\begin{equation} \label{fru}
	\lim_{n\to +\infty} \|u_n\|_V^N=A^N\geq \|u\|_V^N.
	\end{equation}
	Let $G\colon\R^+ \to \R^+$,
	$$
	G(t):=\int_0^t \sqrt[N]{\frac N2\frac{F(s)f'(s)}{f^2(s)}-\frac{N-2}{2}}ds,
	$$
	and notice that $G\in  C^1(\R^+)$ thanks to $(f_2)$.  By H\"{o}lder's inequality we have
	\begin{multline}\label{estG}
	G^N(t)\leq \left(\int_0^t ds\right)^{N-1} \int_0^t \left(\frac N2\frac{F(s)f'(s)}{f^2(s)}-\frac{N-2}{2}\right)ds\\
	=t^N-\frac N2t^{N-1}\frac{F(t)}{f(t)} \ .
	\end{multline}
Set
	$$v_n:=G(u_n)>0\ .$$
Since $u_n$ is bounded in $W_V^{1,N}(\Rn)$ and thanks to $(f_2)$, 
	\begin{equation*}
	\int_{\Rn}|\nabla v_n|^N dx=\int_{\Rn}|\nabla u_n|^N\left(\frac N2\frac{F(u_n)f'(u_n)}{f^2(u_n)}-\frac{N-2}{2}\right) dx\leq C
	\end{equation*}
	and
	\[
		\int_{\Rn}Vv_n^N dx=\int_{\Rn}V G^N(u_n) dx\leq C \int_{\Rn} V u_n^N \, dx \leq C.
		\]
We claim that for $n$ large enough
	$$\|\nabla v_n\|_V^N\leq 1.$$
		
	\noindent Combining the facts $\mathcal I_V(u_n) \to c$,
	\eqref{fru1} and \eqref{fru}, we have
	\begin{equation*}
	\lim_{n\to +\infty}\frac 1{\gamma_N}\int_{\Rn}\left[\log\left(\frac{1}{|x|}\right)\ast
	F(u_n)\right]F(u_n) dx=2 \left( \frac{A^N}N- c\right).
	\end{equation*}
	Moreover, since $\mathcal I_V'(u_n) \to 0$ in $\left(W^{1,N}_VL^q_{w_0}(\Rn)\right)'$, we have 
	\[\mathcal I_V'(u_n)\left[ \frac{F(u_n)}{f(u_n)}\right] \to 0,\]
	and hence
	\eqlab{ \label{fru2}
	&\int_{\Rn}|\nabla u_n|^N\left(1-\frac{F(u_n)f'(u_n)}{f^2(u_n)}\right) dx + \int_{\Rn}Vu_n^{N-1}\frac{F(u_n)}{f(u_n)} dx
	 \\
	&	-\frac{1}{\gamma_N} \int_{\Rn}\left[\log\left(\frac{1}{|x|}\right)\ast
	F(u_n)\right]F(u_n) dx = {\rm{o}}(1).
	}
Again by \eqref{fru} we get 
	\bgs{
	&\int_{\Rn}|\nabla u_n|^N\left(1-\frac{F(u_n)f'(u_n)}{f^2(u_n)}\right) dx+\int_{\Rn}V u_n^{N-1}\frac{F(u_n)}{f(u_n)} dx+2c\\
	- &\;\frac 2N\int_{\Rn}|\nabla u_n|^N dx -\frac 2N\int_{\Rn}V u_n^{N} dx = {\rm{o}}(1).
	}
	Therefore, thanks to \eqref{estG}
	\eqlab{ \label{normvn1}
	\|v_n\|_V^N= &\;\int_{\Rn}|\nabla G(u_n)|^N dx +\int_{\Rn}VG^N(u_n) dx
	\\
	=&\; \int_{\Rn} |\nabla u_n|^N \left(\frac N2\frac{F(u_n) f'(u_n)}{f^2(u_n)}-\frac{N-2}2\right) \, dx+ \int_{\Rn}VG^N(u_n) dx
	\\
	= &\;Nc+\int_{\Rn} V\left(\frac N2u_n^{N-1}\frac{F(u_n)}{f(u_n)}-u_n^N+G^N(u_n)\right) dx+{\rm{o}}(1)
	\\
		\leq &\; Nc+{\rm{o}}(1) <1}
	for $n$ large enough.

	\noindent At this point, we are able to improve the exponential integrability of $u_n$. Thanks to $(f_3)$, for any $\epsilon>0$ there exists $t_\epsilon>0$ large enough such that
	$$
	1-\epsilon<\sqrt[N]{\frac N2\frac{F(t)f'(t)}{f^2(t)}-\frac{N-2}{2}}\leq 1+\epsilon, \quad \text{ for all }\ t\geq t_\epsilon\ .
	$$
	Next by  $(f_2)$ we also have either $u_n(x)\leq t_\epsilon $ or $u_n(x)\geq t_\epsilon$ which implies
	\begin{multline}\label{vn-un}
	v_n\geq \int_{0}^{t_\epsilon}\sqrt[N]{\delta \frac N2} dt + \int_{t_\epsilon}^{u_n}(1-\epsilon)dt\\
	\geq \sqrt[N]{\delta \frac N2} t_\epsilon +(1-\epsilon)(u_n-t_\epsilon)\geq (1-\epsilon)(u_n-t_\epsilon)
	\end{multline}
	and in turn 
	$$
	u_n\leq t_\epsilon + \frac{v_n}{1-\epsilon}, \quad \hbox{ for any } x\in \mathbb R^N \ .
	$$
	Hence, by \eqref{f11} and since $F$ is an increasing function, and recalling that $q>N$,
	\begin{multline}\label{Falpha1}
	\int_{\Rn} F^\alpha(u_n) dx = \int_{0\leq u_n\leq t_\epsilon} F^\alpha(u_n) dx + \int_{ u_n\geq t_\epsilon} F^\alpha(u_n) dx\\
	\leq C_\epsilon\int_{u_n\leq t_\epsilon}u_n^{N\alpha} dx \\
	+\int_{u_n\geq t_\epsilon}\left[ F\left(t_\epsilon + \frac{v_n}{1-\epsilon}\right)\right]^\alpha dx\\
	\leq C_\epsilon \int_{u_n\leq t_\epsilon}u_n^{N}dx +C\int_{u_n\geq t_\epsilon}\hspace{-.2cm}  \left(t_\epsilon + \frac{v_n}{1-\epsilon}\right)^{\alpha(p-\frac 1{N-1})}\hspace{-.6cm} \phi_N \left( \alpha \alpha_N (t_\varepsilon + \frac{v_n}{1-\epsilon})^{\frac{N}{N-1}}\right) dx\\
	\leq  C_\epsilon \|u_n\|_N^N+C_\epsilon\int_{u_n\geq t_\epsilon}\phi_N\left( \alpha \alpha_N(1+\epsilon)(t_\epsilon + \frac{v_n}{1-\epsilon})^{\frac N{N-1}}\right)dx\\
\leq	  C_\epsilon \|u_n\|_N^N+C_\epsilon\int_{\Rn}
\phi_N\left(\alpha \alpha_N (1+\epsilon)^\frac{N}{N-1}\frac{v_n^{\frac N{N-1}}}{(1-\epsilon)^{\frac{N}{N-1}}}\right)dx,
	\end{multline}
	where $C_\epsilon>0$ may change from line to line. (The last inequality can be verified just observing that for large values of $u_n$, also $v_n$ is large so that  $(t_\epsilon + \frac{v_n}{1-\epsilon})^{\frac N{N-1}}\sim (\frac{v_n}{1-\epsilon})^{\frac N{N-1}} $).
	
	\noindent 	Set
	$$
	\eta:= \frac 1{Nc}-\alpha >0$$
	and let us  fix $0<\epsilon_\alpha<1$, depending on $\alpha<\frac 1{Nc}$  such that
	$$
	\frac{(1+\epsilon_\alpha)^{\frac N{N-1}}}{(1-\epsilon_\alpha)^{\frac N{N-1}}}\left(1-\eta^2(Nc)^2\right)<1 .
	$$
	With these choices we obtain
	\begin{multline*}
	\int_{\Rn}F^\alpha(u_n)
	 dx\leq C_\alpha \|u_n\|_N^N+\\
	 C_\alpha\int_{\Rn}
	\phi_N\left(\alpha \alpha_N \frac{(1+\epsilon_\alpha)^\frac{N}{N-1}}{(1-\epsilon_\alpha)^\frac{N}{N-1}}\|v_n\|_V^{\frac{N}{N-1}}\frac{|v_n|^{\frac N{N-1}}}{\|v_n\|_V^{\frac{N}{N-1}}}\right)dx\ .
	\end{multline*}
	By \eqref{normvn1}, $\|v_n\|_V^N\leq Nc+\rm{o}(1)$ as $n$ is large enough, so that
	$$
	\|v_n\|_V^N\leq Nc+(Nc)^2\eta,  \quad \hbox{ as } n\to +\infty\ .
	$$
	Thus,
	\begin{multline*}
	\alpha\frac{(1+\epsilon_\alpha)^{\frac N{N-1}}}{(1-\epsilon_\alpha)^{\frac N{N-1}}}\|v_n\|_V^N\leq  \left(\frac 1{Nc}-\eta\right)	\frac{(1+\epsilon_\alpha)^{\frac N{N-1}}}{(1-\epsilon_\alpha)^{\frac N{N-1}}}Nc(1+Nc\eta)\\
	=	\frac{(1+\epsilon_\alpha)^{\frac N{N-1}}}{(1-\epsilon_\alpha)^{\frac N{N-1}}}\left(1-(Nc)^2\eta^2\right)<1,
	\end{multline*}
	and finally we obtain
	$$
	\int_{\Rn}F^\alpha(u_n) dx\leq C_\alpha \|u_n\|_N^N+C_\alpha\int_{\Rn} \phi_N\left(\alpha_N \frac{|v_n|^{\frac N{N-1}}}{\|v_n\|_V^{\frac N{N-1}}}\right) dx \leq C_{\alpha}\ .
	$$

\end{proof}
\begin{proposition}\label{prop-PS}
	Assume that conditions $(V)$ and $(f_1)$--$(f_4)$ are satisfied. Let
	$\{u_n\}\subset W^{1,N}_{q,w}$ be a PS sequence for $\mathcal I_V$ at level $c<1/N$, weakly converging to $u$ in $W_V^{1,N}(\Rn)$. If $u\neq 0$, then $u\in W^{1,N}_VL^q_{w_0(\Rn)}$ and
	$u_n\rightharpoonup u$ weakly in  $W^{1,N}_VL^q_{w_0(\Rn)}$. Furthermore, as $n\to\infty$
	\begin{eqnarray} \label{convFF}
	\Big(\log |x| \ast F(u_n)\Big)f(u_n)\longrightarrow\Big(\log
	|x|\ast F(u)\Big)f(u) \quad \hbox{ in } L_{loc}^1(\Rn)
	\end{eqnarray}
	and $u$ is a weak solution to \eqref{main}.
\end{proposition}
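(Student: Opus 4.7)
The plan is to proceed in three stages: upgrade the weak and a.e.\ convergence to strong $L^1$-convergence of $F(u_n)$; derive a uniform moment bound on $F(u_n)\log(1+|x|)$ implying both that $u$ lies in $W^{1,N}_V L^q_{w_0}(\Rn)$ and that $u_n \rightharpoonup u$ in this larger space; then pass to the limit in $\langle \mathcal I_V'(u_n),\varphi\rangle \to 0$ for $\varphi \in C^\infty_c(\Rn)$ to identify $u$ as a weak solution of \eqref{main}.

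For the first stage, Lemma \ref{lem-PSbdd} ensures boundedness in $W^{1,N}_V(\Rn)$, so up to a subsequence $u_n \rightharpoonup u$ weakly, $u_n \to u$ in $L^s_{\loc}(\Rn)$ for every finite $s\ge 1$, and pointwise a.e. Since $c<1/N$, Lemma \ref{lem-PSimproveTM} yields $\sup_n \|F(u_n)\|_{L^\alpha(\Rn)} < \infty$ for some $\alpha>1$, and Vitali's theorem (via a.e.\ convergence together with equi-integrability) gives $F(u_n) \to F(u)$ strongly in $L^1(\Rn)$. Condition $(f_2)$ and the same approach provide $f(u_n)\to f(u)$ in $L^r_\loc(\Rn)$ for some $r>1$, whence also $u_n f(u_n) \to uf(u)$ in $L^1_\loc(\Rn)$.

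For the second stage, I exploit the nonlocal-energy bound from Lemma \ref{lem-PSbdd}(ii) and split the double integral into the near-diagonal region $\{|x-y|\le 1\}$, where $\log(1/|x-y|) \ge 0$ is controlled by the Hardy--Littlewood--Sobolev inequality (Proposition \ref{HLS}) applied to $|x-y|^{-\mu}$ with $\mu>0$ small together with the $L^\alpha$ bound of stage one, and the far region $\{|x-y|>1\}$, where a geometric decomposition of the kernel using the inequalities relating $\log|x-y|$ to $\log(1+|x|)+\log(1+|y|)$ combined with $\|F(u_n)\|_{L^1}\le C$ allows one to extract
\begin{equation*}
\sup_n \int_{\Rn} F(u_n)\log(1+|x|)\, dx \le C.
\end{equation*}
Fatou's lemma transfers this to $u$, and combining the behavior $F(s)\asymp s^q$ near the origin (from $(f_1)$) with the weighted Pohozaev--Trudinger inequality of Theorem \ref{thm_A2} for large $|u|$ yields $\int |u|^q \log(1+|x|)\,dx < \infty$, so $u \in W^{1,N}_V L^q_{w_0}(\Rn)$. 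The same argument applied uniformly to $\{u_n\}$ gives $\sup_n \|u_n\|_{q,V,w_0}<\infty$, hence $u_n \rightharpoonup u$ weakly in $W^{1,N}_V L^q_{w_0}(\Rn)$.

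For the third stage, fix $\varphi \in C^\infty_c(\Rn)$. The quasilinear term $\int |\nabla u_n|^{N-2}\nabla u_n\cdot\nabla\varphi$ passes to the limit by a Boccardo--Murat/Browder-type argument upgrading weak to a.e.\ convergence of $\nabla u_n$ on $\operatorname{supp}\varphi$, and the potential term passes by dominated convergence using a.e.\ convergence and uniform $L^N(V\,dx)$ control. For the nonlocal term, split the inner convolution $\int \log|x-y| F(u_n)(y)\,dy$ into the regions $\{|y|\le R\}$ and $\{|y|>R\}$: on $\{|y|\le R\}$ strong $L^1$-convergence of $F(u_n)$ from stage one, together with boundedness of $\log|x-y|$ uniformly in $x\in\operatorname{supp}\varphi$, gives $L^1_\loc$-convergence; on $\{|y|>R\}$ the inequality $|\log|x-y|| \lesssim \log(1+|x|)+\log(1+|y|)$ together with the moment bound of stage two makes the tail uniformly small as $R\to\infty$. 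Combined with local strong convergence of $f(u_n)$, this establishes \eqref{convFF} and, inserted into the limit of $\langle \mathcal I_V'(u_n),\varphi\rangle$, shows that $u$ solves \eqref{main} weakly. The main obstacle is stage two: the sign-changing logarithmic kernel precludes any direct monotone or Fatou passage and the moment estimate rests crucially on the subcritical-level assumption $c<1/N$, used through Lemma \ref{lem-PSimproveTM}; the sharp decomposition of the log-kernel is the technical heart of the argument.
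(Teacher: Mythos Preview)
Your overall architecture matches the approach of \cite{castar}, but stage two has a genuine gap: you never invoke the hypothesis $u\neq 0$, and without it the moment bound
\[
\sup_n \int_{\Rn} F(u_n)\log(1+|x|)\,dx \le C
\]
does not follow from the kernel decomposition together with $\|F(u_n)\|_{L^1}\le C$. To see why, imagine the mass of $F(u_n)$ drifting to infinity while remaining concentrated; then $|x-y|$ stays bounded on the bulk of the support, the double integral $\iint \log(1+|x-y|)F(u_n)(x)F(u_n)(y)\,dx\,dy$ remains bounded, yet $\int F(u_n)\log(1+|x|)\,dx$ diverges. The missing step is this: from $u\neq 0$ and $F(u_n)\to F(u)$ in $L^1_{\loc}$ one extracts $R,\delta>0$ with $\int_{B_R}F(u_n)\ge\delta$ for large $n$; since $\log(1+|x-y|)\ge\log(1+|x|)-C_R$ whenever $y\in B_R$, restricting the $y$-integration to $B_R$ in the bound coming from Lemma~\ref{lem-PSbdd}(ii) (after isolating the near-diagonal part by HLS exactly as you describe) yields
\[
C \;\ge\; \delta\int_{\Rn}\bigl(\log(1+|x|)-C_R\bigr)F(u_n)(x)\,dx,
\]
and the moment estimate follows. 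You correctly flag $c<1/N$ (through Lemma~\ref{lem-PSimproveTM}) as essential for the near-diagonal contribution, but the far-field control is precisely where $u\neq 0$ enters, and that is the actual heart of stage two.

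A secondary point: in stage one you assert $F(u_n)\to F(u)$ strongly in $L^1(\Rn)$ via Vitali, but on the whole space Vitali also requires tightness, which only becomes available \emph{after} the moment bound of stage two. The correct order is: first $L^1_{\loc}$-convergence (local Vitali from the $L^\alpha$ bound), then the moment bound via $u\neq 0$ as above, and only then any global $L^1$ conclusions.
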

\noindent For the proof we refer to \cite[Proposition 6.3]{castar}.

	\subsection{Proof  of Theorem \ref{thm1}}\label{final_sec}
	
 \noindent  The functional $\mathcal I_V$ satisfies the Mountain Pass geometry thanks to  Lemma \ref{lemMP}. This yields a (PS) sequence  $\{u_n\}\subset W^{1,N}_VL^q_{w_0}(\Rn)$ at level ${m_V}$. Then, by Lemma \ref{lem-PSimproveTM} we have that $\{u_n\}$ is bounded in $u\in W_V^{1,N}(\Rn)$ and it weakly converges to some  $u\in W_V^{1,N}(\Rn)$. It remains to prove that $u\neq 0$. 

\noindent Either $\{u_n\}$ is vanishing, that is for any $r>0$
\[
\lim_{n\to +\infty}\sup_{y\in \Rn}\int_{B_r(y)}|u_n|^N dx=0
\]
or, there exist $r, \delta >0$ and a sequence $\{y_n\}\subset \mathbb Z^N$ such that
\[
\lim_{n\to \infty}\int_{B_r(y_n)}|u_n|^N dx\geq \delta.
\]
If $\{u_n\}$ is vanishing, by Lions' concentration-compactness principle we have
\begin{equation}\label{Lions}
u_n\to 0 \quad \mbox{in} \quad  L^s(\Rn) \quad \forall \, s>N,
\end{equation}
as $n\to\infty$. In this case it is standard to show that
\[
\|F(u_n)\|_{\gamma}, \|u_nf( u_n)\|_{\gamma}\to 0
\]
for some values of $\gamma>1$ and close to 1, thanks to the improved exponential integrability given by Lemma \ref{lem-PSimproveTM} and the growth assumption $F(t)<\frac N2tf(t)$, see \eqref{F/f}). Hence, by applying the HLS inequality (Proposition \ref{HLS}) we obtain as $n\to\infty$, similarly to Proposition \ref{prop-PS}:
\begin{eqnarray}
\int_{\R^{2N}}\log\left(1+\frac{1}{|x-y|}\right) F(u_n(x))F(u_n(y))dxdy &\to 0& \label{fineq1}\\
\int_{\R^{2N}} \log\left(1+\frac{1}{|x-y|}\right) F(u_n(x))u_n(y)f(u_n(y)) dx dy &\to 0& \label{fineq2}
\end{eqnarray}
Combining \eqref{fineq1}-\eqref{fineq2} and the facts $\mathcal I_V(u_n) \to c$ and $\mathcal I'_V(u_n)[v]\to 0$ on $C^\infty_c(\Rn)$ test functions, we obtain
\begin{multline*}
\frac 1{\gamma_N}\int_{\R^{2N}} \log\left(1+|x-y|\right) F(u_n(x))\left[F(u_n(y))-\frac 2Nu_n(y)f(u_n(y))\right] dx dy\\
= 2m_V+{\text{o}}(1)
\end{multline*}
so that $m_v\leq 0$ thanks to \eqref{F/f}, which is not possible. Therefore, the vanishing case does not occur.

\medskip

\noindent Now set $v_n:=u_n(\cdot-y_n)$, then
\begin{equation}\label{nonvan}
\int_{B_r(0)}|v_n|^2 dx\geq \delta\ .
\end{equation}
Using the periodicity assumption, $\mathcal I_V$ and $\mathcal I'_V$ are both invariant by the $\mathbb Z^N$-action, therefore $\{v_n\}$ is
still a PS sequence at level $m_V$. Then $v_n\rightharpoonup v$ in $W_V^{1,N}(\Rn)$ with $v\neq 0$ by using \eqref{nonvan}, since $v_n\to v $ in $L^N_{loc}(\Rn)$. We conclude by Proposition \ref{prop-PS} that $v\in W^{1,N}_V L^q_{w_0}(\Rn)$ is a nontrivial critical point of $\mathcal I_V$ and $\mathcal I_V(v)=m_V$, which completes the proof of Theorem \ref{thm1}.

\end{document}